\newcommand{\tn}[1]{\textnormal{#1}}
\newcommand{\m}{\mathfrak{m}}
\newcommand{\ds}{\displaystyle}
\newcommand{\z}{\mathbb{Z}}
\newcommand{\n}{\mathbb{N}}
\newcommand{\too}{\longrightarrow}
\newcommand{\onto}{\twoheadrightarrow}
\newcommand{\into}{\hookrightarrow}
\newcommand{\vp}{\varphi}
\newcommand{\ztt}{\z[t^{\pm 1}]}
\newcommand{\mcm}{maximal Cohen-Macaulay}
\newcommand{\krs}{Krull-Remak-Schmidt}
\newcommand{\fg}{finitely generated}
\newcommand{\modr}{\mathcal{C}(R)}
\newcommand{\modnor}{\mathcal{C}}
\newcommand{\mr}{\mathcal{M}(R)}
\newcommand{\mnor}{\mathcal{M}}
\newcommand{\bpr}{\begin{proof}}
\newcommand{\epr}{\end{proof}}
\DeclareMathOperator{\pd}{pd}
\DeclareMathOperator{\tor}{Tor}
\DeclareMathOperator{\ext}{Ext}
\DeclareMathOperator{\coker}{Coker}
\DeclareMathOperator{\rank}{rank}
\DeclareMathOperator{\depth}{depth}
\DeclareMathOperator{\ann}{Ann}
\renewcommand{\ker}{\tn{Ker}}
\renewcommand{\hom}{\tn{Hom}}
\theoremstyle{plain}
\newtheorem{thm}{Theorem}[section] 
\newtheorem{lem}[thm]{Lemma}
\newtheorem{prop}[thm]{Proposition}
\newtheorem{cor}[thm]{Corollary}
\theoremstyle{definition}
\newtheorem{defn}[thm]{Definition}
\newtheorem{ex}[thm]{Example}
\theoremstyle{remark}
\newtheorem{rmk}[thm]{Remark}
\newtheoremstyle{named}{}{}{\itshape}{}{\bfseries}{.}{.5em}{\thmnote{#3}}
\theoremstyle{named}
\newtheorem*{namedtheorem}{Theorem}
\numberwithin{equation}{section}
\definecolor{darkgreen}{RGB}{0,100,0}
\begin{document}

\title{Periodic modules over Gorenstein local rings}
\author{Amanda Croll}
\address{Department of Mathematics, University of Nebraska--Lincoln, Lincoln, NE 68588}
\email{s-acroll1@math.unl.edu}
\thanks{Research partially supported by NSF grants DMS-0903493 and DMS-1201889.}
\date{\today}

\subjclass[2010]{13D02, 13H10}

\begin{abstract}
It is proved that the minimal free resolution of a module $M$ over a Gorenstein local ring $R$ is eventually periodic if, and only if, the class of $M$ is torsion in a certain $\ztt$-module associated to $R$. This module, denoted $J(R)$, is the free $\ztt$-module on the isomorphism classes of finitely generated $R$-modules modulo relations reminiscent of those defining the Grothendieck group of $R$. The main result is a structure theorem for $J(R)$ when $R$ is a complete Gorenstein local ring; the link between periodicity and torsion stated above is a corollary. 
\end{abstract}

\maketitle



\section{Introduction}
This work makes a contribution to the study of eventually periodic modules over (commutative, Noetherian) local rings. In 1990, Avramov \cite{LA90} posed the problem of characterizing rings that have a periodic module. Eisenbud \cite{DE80} had previously shown that every complete intersection ring has a periodic module, but the question remained unanswered for other rings, even those which are Gorenstein. In Corollary \ref{tors_epmod}, we prove that a complete Gorenstein local ring $R$ has a periodic module if and only if there is torsion in a certain $\ztt$-module associated to $R$, where $\ztt$ denotes the ring of Laurent polynomials. This module, which we denote $J(R)$, is the free $\ztt$-module on the isomorphism classes of finitely generated $R$-modules modulo relations reminiscent of those defining the Grothendieck group of $R$; see Definition \ref{Jmod} and Proposition \ref{altdefn}. 

The main result of this paper is a structure theorem for $J(R)$ when $R$ is a Gorenstein local ring with the \krs{} property; see Theorem \ref{structurethm}. As a corollary, we deduce that an $R$-module is eventually periodic if and only if its class in $J(R)$ is annihilated by some non-zero element of $\ztt$. This leads to a characterization of hypersurface rings in terms of $J(R)$; see Corollary \ref{hypersurface}. 

This paper is motivated by work of D.R. Jordan \cite{DRJ10}, who defined the module $J(R)$ and proved that if the class of a module in $J(R)$ is torsion then the module has a rational Poincar\'{e} series. The converse, however, does not hold. Indeed, Jordan proved that, for an Artinian complete intersection ring $R$ with codimension at least two, the class of its residue field is not torsion in $J(R)$. Corollary \ref{hypersurface} contains this result, since the residue field of a complete intersection ring is eventually periodic if and only if the codimension is at most one.  

%
\section{The module $J(R)$} \label{Jmodsection}

All rings considered in this paper are commutative and Noetherian. Let $R$ be a ring and $\modr$ the set of isomorphism classes of \fg{} $R$-modules; write $[M]$ for the class of an $R$-module $M$ in $\modr$. When the ring is clear from context, we write $\modnor$ instead of $\modr$. 

\begin{defn}
\label{Jmod}
Let $F$ be the free $\ztt$-module $\ztt^{(\modnor)}$, that is, $$\ds F = \bigoplus_{[M]\in\modnor} \ztt[M],$$ and let $I$ be the $\ztt$-submodule generated by the following elements:
\begin{enumerate}
\item [(R1)] $[M]-[M']$ for every exact sequence of \fg{} $R$-modules\\ \makebox{$0\to P \to M \to M'\to 0$} with $P$ projective;
\item [(R2)] $[M]-t[M']$ for every exact sequence of \fg{} $R$-modules  \\ $0\to M' \to P \to M\to 0$ with $P$ projective;
\item [(R3)] $[M\oplus M'] - [M]-[M']$ for all \fg{} $R$-modules $M$ and $M'$.
\end{enumerate}
The main object of study in this article is the $\ztt$-module:
$$J(R) = F/I.$$
\end{defn}

In the following remark, we make a few observations about the module $J(R)$. 

\begin{rmk} 
\label{firstrmks} 
Let $M, M',$ and $P$ be \fg{} $R$-modules with $P$ projective.  
\begin{enumerate}
\item \label{rmk1}
$[P]=0$ in $J(R).$ 
\item \label{rmk2}
If \makebox{$0\to M \to M' \to P\to 0$} is exact, then $[M]-[M']=0$ in $J(R)$.
\end{enumerate}
\end{rmk}

Indeed, for (1), note that there is an exact sequence $0 \to P = P  \to 0$, and so the desired result follows from (R1).

To prove (2), notice that $M' \cong M\oplus P$ since $P$ is projective. Then in $J(R)$, $[M'] = [M] + [P]$ by (R3). Since $[P]=0$ in $J(R)$, it follows that $[M'] = [M]$.

The module $J(R)$ was defined by D.R. Jordan in \cite{DRJ10} and called the \emph{Grothendieck module}.  In Jordan's definition, the submodule $I$ is generated by four types of elements: the three given in Definition \ref{Jmod} as well as elements of the form $[M]-[M']$ where $M$ and $M'$ are modules as in Remark \ref{firstrmks}.(\ref{rmk2}). 

\begin{rmk} \label{gg}
The \emph{Grothendieck group} $\mathcal{G}$ of a ring $R$ is the free $\z$-module $\z^{(\mathcal{C})}$ modulo the subgroup generated by the Euler relations, that is, elements of the form \makebox{$[M']-[M]+[M'']$} for each exact sequence $0\to M'\to M\to M''\to 0$ of \fg{} \makebox{$R$-modules.} The \emph{reduced Grothendieck group}  $\overline{\mathcal{G}}$ of $R$ is the group $\mathcal{G}$ modulo the subgroup generated by classes of modules of finite projective dimension. We note that $\overline{\mathcal{G}} = J(R)/L$, where $L$ is the submodule generated by the Euler relations. 
\end{rmk}
 
\subsection{Syzygies}
In order to discuss syzygies, we recall Schanuel's Lemma; a proof can be found in \cite[Thm 4.1.A]{IK74}.
\begin{namedtheorem}[Schanuel's Lemma]\label{Schanuel}
Given exact sequences of $R$-modules
$$ 0\to K\to P\to M \to 0 \quad \tn{ and } \quad 0\to K'\to P'\to M \to 0$$
with $P$ and $P'$ projective, there is an isomorphism $K \oplus P' \cong K' \oplus P$ of \makebox{$R$-modules. \qedsymbol}
\end{namedtheorem}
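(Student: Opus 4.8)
The plan is to build a single module that mediates between the two presentations by means of a pullback. Write $\pi\colon P\to M$ and $\pi'\colon P'\to M$ for the given surjections, and form the fiber product
$$X \;=\; P\times_M P' \;=\; \{(p,p')\in P\oplus P' : \pi(p)=\pi'(p')\},$$
an $R$-submodule of $P\oplus P'$ that comes equipped with the two coordinate projections $\rho\colon X\to P$ and $\rho'\colon X\to P'$.

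First I would check that $\rho$ is surjective: given $p\in P$, surjectivity of $\pi'$ furnishes some $p'\in P'$ with $\pi'(p')=\pi(p)$, so $(p,p')\in X$ and $\rho(p,p')=p$. A short computation then identifies $\ker\rho$ with $\{0\}\times K'\cong K'$, since an element $(0,p')\in X$ forces $\pi'(p')=\pi(0)=0$, i.e.\ $p'\in K'$. This yields a short exact sequence $0\to K'\to X\xrightarrow{\rho} P\to 0$. By the symmetric argument, $\rho'$ is surjective with kernel $K\times\{0\}\cong K$, giving $0\to K\to X\xrightarrow{\rho'} P'\to 0$.

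Since $P$ is projective, the first sequence splits, so $X\cong K'\oplus P$; since $P'$ is projective, the second sequence splits, so $X\cong K\oplus P'$. Comparing the two descriptions of $X$ yields $K\oplus P'\cong K'\oplus P$, which is exactly the assertion. I do not anticipate a genuine obstacle here: the only points to verify are surjectivity of the two projections and the identification of their kernels, both of which are routine diagram chases, while the splittings are immediate from the definition of projectivity. The one conceptual observation—and the reason the pullback is the correct object—is that $X$ simultaneously carries a copy of each presentation, so that in each short exact sequence the projective module cancels against the corresponding syzygy.
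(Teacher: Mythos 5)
Your proof is correct. The paper does not prove Schanuel's Lemma itself but simply cites Kaplansky's \emph{Commutative Rings} (Thm.\ 4.1.A); your pullback argument is the standard one found there and in most references, and every step—surjectivity of the two projections, identification of the kernels, and the splittings afforded by projectivity—is carried out correctly.
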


Let $R$ be a ring and $M$ an $R$-module. Denote by $\Omega_R M$ any $R$-module that is the kernel of a homomorphism of $R$-modules $P\twoheadrightarrow M$ with $P$ a finitely generated projective. While $\Omega_R M$ depends on the choice of $P$, Schanuel's Lemma shows that $M$ determines $\Omega_R M$ up to a projective summand. Any module isomorphic to a module $\Omega_R M$ is called a \emph{syzygy} of $M$. For any $d>1$, a \emph{dth syzygy} of $M$ is a module  $\Omega^d_R M$ such that \makebox{$\Omega^d_R M = \Omega_R (\Omega^{d-1}_R M)$} for some $(d-1)st$ syzygy of $M$. By Schanuel's Lemma, $\Omega^d _R M$ is also determined by $M$ up to a projective summand. For any $n\ge 0$, we write $\Omega^n M$ when the ring is clear from context. 

The syzygy gives a well-defined functor on $J(R)$, as shown in Lemma \ref{syzy_hom}. The following remark will aid in this discussion. 
\begin{rmk} \label{bigdiag}
If $0\to P\to M \to M'\to 0$ is an exact sequence of $R$-modules with $P$ projective, then there is a module that is a syzygy of both $M$ and $M'$.

Indeed, pick a surjective map $G'\twoheadrightarrow M'$, with $G'$ a projective $R$-module.  Consider the following diagram. Let $X$ be the pullback of $M\to M'$ and $G'\to M'$. 
Since $G'\to M'$ is surjective, $X\to M$ is also surjective. Since $G'$ and $P$ are projective, $X$ is projective. Hence the kernel of $X\onto M$ is a syzygy of $M$; let $N$ be this kernel. Let $N'$ denote the kernel of $G\onto M'$. Then there is a commutative diagram with exact rows as follows.

\begin{center} \hspace{0pt}
\xymatrix{
0\ar[r] & P \ar[r] & M \ar[r] & M' \ar[r] & 0\\
0 \ar[r] & P\ar@{=}[u] \ar[r] & X\ar@{->>}[u] \ar[r] & G'\ar@{->>}[u] \ar[r]  & 0\\
 & & N \ar@{^{(}->}[u]\ar[r]^{\cong} & N' \ar@{^{(}->}[u]
}
\end{center}
This justifies the claim.
\end{rmk}

\begin{lem} \label{syzy_hom}
Assigning $[M]$ to $[\Omega M]$ induces a $\ztt$-linear map $$\Omega\!:\!J(R) \to J(R).$$
\end{lem}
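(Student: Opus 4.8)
The plan is to check that $\Omega$ is well-defined on the free module $F$ modulo $I$, and then that it respects the $\ztt$-action. First I would define $\Omega$ on generators of $F$ by $[M]\mapsto[\Omega M]$; this is well-defined on $F$ because, by Schanuel's Lemma, any two choices of syzygy of $M$ differ by a projective summand, and projective summands are zero in $J(R)$ by Remark \ref{firstrmks}.(\ref{rmk1}), so the image $[\Omega M]\in J(R)$ does not depend on the chosen projective cover. (Strictly, one first gets a map $F\to J(R)$, $[M]\mapsto[\Omega M]$, and then must see it kills $I$.) Extending $\ztt$-linearly gives a map $F\to J(R)$; it remains to verify that the three families of generators (R1), (R2), (R3) of $I$ map to $0$.

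For (R3), if $0\to M'\to P\to M\to 0$ — no; rather, given $M\oplus M'$, choosing projective covers $G\onto M$ and $G'\onto M'$ gives $G\oplus G'\onto M\oplus M'$ with kernel $\Omega M\oplus\Omega M'$, so $[\Omega(M\oplus M')]=[\Omega M]+[\Omega M']$ in $J(R)$ by (R3), and the generator $[M\oplus M']-[M]-[M']$ maps to $0$. For (R1), consider an exact sequence $0\to P\to M\to M'\to 0$ with $P$ projective: Remark \ref{bigdiag} produces a module $N$ that is simultaneously a syzygy of $M$ and of $M'$, hence $[\Omega M]=[N]=[\Omega M']$ in $J(R)$ (using that syzygies are determined up to projective summand, which vanishes in $J(R)$), so $[M]-[M']$ maps to $0$. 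For (R2), take $0\to M'\to P\to M\to 0$ with $P$ projective: this exhibits $M'$ as a syzygy of $M$, so $[\Omega M]=[M']$ in $J(R)$; meanwhile choosing a projective cover $G\onto M'$ with kernel $\Omega M'$, the element $[M]-t[M']$ maps to $[\Omega M]-t[\Omega M']=[M']-t[\Omega M']$, and the exact sequence $0\to\Omega M'\to G\to M'\to 0$ is precisely an (R2)-relation showing $[M']=t[\Omega M']$ in $J(R)$; hence the image is $0$. Therefore the map factors through $F/I=J(R)$, giving $\Omega\colon J(R)\to J(R)$, and it is $\ztt$-linear by construction.

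The main thing to be careful about — the "obstacle," though it is more a bookkeeping point than a genuine difficulty — is the interaction between the ambiguity in the choice of syzygy and each of the relations, especially (R2). One must consistently use that a change of projective cover alters $\Omega M$ only by a projective summand and that such summands are $0$ in $J(R)$, so that the identities above hold after passing to $J(R)$ even though they fail on the nose in $\modnor$. With that understood, all three verifications reduce to the cited facts (Schanuel's Lemma, Remark \ref{firstrmks}, and Remark \ref{bigdiag}) together with the defining relations (R1)--(R3), and the lemma follows.
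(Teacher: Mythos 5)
Your proof is correct and follows essentially the same route as the paper: define $\tilde{\Omega}\colon F\to J(R)$ by $[M]\mapsto[\Omega M]$ (well-defined by Schanuel's Lemma together with $[P]=0$), then verify (R1), (R2), (R3) lie in the kernel, using Remark \ref{bigdiag} for (R1), the observation that $M'$ is itself a syzygy of $M$ combined with the (R2)-relation for $M'$ to handle (R2), and $\Omega(M\oplus M')\cong\Omega M\oplus\Omega M'$ for (R3). The only cosmetic difference is that the paper phrases the (R2) computation in terms of $t^{-1}$ and $\ztt$-linearity, whereas you compute directly with $t$, arriving at the same cancellation.
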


\bpr
By Schanuel's Lemma, the assignment $[M] \mapsto [\Omega M]$ gives a homomorphism $$\ds \tilde{\Omega}: \bigoplus_{[M]\in \modnor} \ztt [M] \to J(R)$$  of $\ztt$-modules. It is enough to check that (R1), (R2), and (R3) from Definition \ref{Jmod} are in $\ker(\tilde{\Omega}),$ so that $\tilde{\Omega}$ factors through $J(R)$; the induced map is $\Omega$.

For (R3), note that for any syzygies $\Omega M$ of $M$ and $\Omega M'$ of $M'$, the $R$-module $\Omega M \oplus \Omega M'$ is a syzygy of $M\oplus M'$. Since $[\Omega M \oplus \Omega M'] = [\Omega M] + [\Omega M']$ in $J(R)$, one finds that 
$\tilde{\Omega}([M\oplus M']) = \tilde{\Omega}([M])\oplus \tilde{\Omega} ([M'])$.

Next, we consider (R2). Given an exact sequence of \fg{} $R$-modules $0\to M'\to P\to M\to 0$ with $P$ projective, we show that $[M']-t^{-1}[M]$ is in $\ker(\tilde{\Omega})$. In $J(R)$, one has $\tilde{\Omega}([M']) = t^{-1}[M']$.
Since $M'$ is a syzygy of $M$, we have $[M'] = [\Omega M]$ in $J(R)$. 
As $[\Omega M] = \tilde{\Omega}([M])$, the $\ztt$-linearity of $\tilde{\Omega}$ implies that 
$$\tilde{\Omega}([M']) = t^{-1}\tilde{\Omega}([M]) = \tilde{\Omega}(t^{-1}[M]).$$
Therefore (R2) is in $\ker(\tilde{\Omega})$.

Finally, we verify that (R1) is in $\ker (\tilde{\Omega})$. Given an exact sequence of \fg{} $R$-modules  $0\to P\to M \to M'\to 0$ with $P$ projective, Remark \ref{bigdiag} implies there is a module $L$ that is a syzygy of both $M$ and $M'$. Therefore \makebox{$\tilde{\Omega}([M]) = [L] = \tilde{\Omega}([M'])$.}
\epr
 
For $i>1$, define $\Omega^i:  J(R) \to J(R)$ by $\Omega^i = \Omega\circ \Omega^{i-1}.$ The next remark demonstrates a relationship in $J(R)$ between the class of a module and the classes of its syzygies.

\begin{rmk} \label{syzygy}
Let $M$ be a finitely generated $R$-module. Then $[M] = t^n[\Omega^n M]$ in $J(R)$ for any $n \in \n$. 

Indeed, (R2) implies that $t[\Omega M] =  [M]$ in $J(R)$. Iterating this, one finds that $[M] = t^n[\Omega^n M]$ for all $n\in\n$.
\end{rmk}

In the next proposition, we give an alternate description of $J(R)$ which makes the relations in this module more transparent. 

\begin{prop} \label{altdefn}
Let $F$ be the free $\ztt$-module $\ztt^{(\mathcal{C})}$, and let $L$ be the \makebox{$\ztt$-submodule} generated by the following elements:
\begin{itemize}
\item [(R1$'$)] $[P]$ for every \fg{} projective $R$-module $P$;
\item [(R2$'$)] $[M]-t[\Omega M]$ for every \fg{} $R$-module $M$;
\item [(R3)] $[M\oplus M'] - [M]-[M']$ for all \fg{} $R$-modules $M$ and $M'$.
\end{itemize}
There is an isomorphism of $\ztt$-modules 
$$J(R) \cong F/L.$$
\end{prop}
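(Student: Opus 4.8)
The plan is to show that the submodules $I$ and $L$ of $F = \ztt^{(\modnor)}$ coincide, which immediately gives the isomorphism $J(R) = F/I \cong F/L$. Since (R3) appears in both generating sets, it suffices to prove the two containments $I \subseteq L$ and $L \subseteq I$ by checking generators.

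First I would prove $L \subseteq I$. For (R1$'$), the vanishing $[P] = 0$ in $J(R)$ is exactly Remark \ref{firstrmks}.(\ref{rmk1}), so $[P] \in I$. For (R2$'$), given a \fg{} $R$-module $M$, pick a surjection $P \onto M$ with $P$ \fg{} projective and let $\Omega M$ be its kernel, giving an exact sequence $0 \to \Omega M \to P \to M \to 0$; this is precisely an instance of (R2), so $[P] - t[\Omega M] \in I$, wait — (R2) with this sequence reads $[M] - t[\Omega M] \in I$ (matching the labels $0 \to M' \to P \to M \to 0$ with $M' = \Omega M$). Hence $[M] - t[\Omega M] \in I$. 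But one must observe that (R2$'$) is asserted for \emph{every} syzygy $\Omega M$, not just the one coming from a chosen surjection; however by Schanuel's Lemma any two syzygies differ by a projective summand, and using (R3) together with (R1$'$) (which we have already placed in $I$) one sees $[\Omega M] - [\Omega' M] \in I$ for any two syzygies, so $[M] - t[\Omega M] \in I$ for all of them. Thus $L \subseteq I$.

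Next I would prove $I \subseteq L$. For (R1), consider an exact sequence $0 \to P \to M \to M' \to 0$ with $P$ projective. By Remark \ref{bigdiag} there is a module $N$ that is simultaneously a syzygy of $M$ and of $M'$. Then modulo $L$ we have $[M] \equiv t[N] \equiv [M']$ using (R2$'$) twice (again modulo the ambiguity of syzygies, which is absorbed by (R1$'$) and (R3) as above), so $[M] - [M'] \in L$. For (R2), consider $0 \to M' \to P \to M \to 0$ with $P$ projective; then $M'$ is a syzygy of $M$, so by (R2$'$) and the syzygy-ambiguity remark, $[M] - t[M'] \in L$. This establishes $I \subseteq L$, completing the proof.

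The main obstacle, and the one point demanding care, is the handling of the fact that $\Omega M$ is only well-defined up to a projective summand: the relations (R2$'$) in Proposition \ref{altdefn} are stated for an arbitrary choice of syzygy, so throughout I must repeatedly invoke the elementary lemma that for any two syzygies $\Omega M$ and $\Omega' M$ of $M$ one has $[\Omega M] - [\Omega' M] \in L$ (respectively $\in I$) — a direct consequence of Schanuel's Lemma, (R3), and (R1$'$) (respectively (R1)). Once this bookkeeping is isolated as a preliminary observation, the two containments are routine. I would state and dispatch that observation first, then run through the six generator checks.
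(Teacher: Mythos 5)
Your argument is correct and matches the paper's proof in substance: both verify, generator by generator, that the relation submodules $I$ and $L$ of $F$ coincide, with Remark \ref{bigdiag} carrying the weight for the (R1) direction and Schanuel's Lemma handling the ambiguity in the choice of syzygy. The paper packages the same computation as a pair of mutually inverse quotient-induced maps $q\colon F/L \to J(R)$ and $p\colon J(R)\to F/L$, but the generator checks are identical, so this is the same approach.
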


\bpr
Via a proof similar to the proof of Lemma \ref{syzy_hom}, it can be shown that assigning $[M]$ to $[\Omega M]$ induces a $\ztt$-linear map \makebox{$\Omega: F/L \to F/L$}

Let $\tilde{q}: F\to J(R)$ be the quotient map. We show that (R1$'$), (R2$'$), and (R3) are in $\ker(\tilde{q})$, and hence $\tilde{q}$ factors through the quotient $F/L$ via a map $q: F/L\to J(R)$. 

The elements given by (R1$'$) are in $\ker(\tilde{q})$ by Remark \ref{firstrmks}.(\ref{rmk1}), and those from (R2$'$) are in $\ker(\tilde{q})$ by Remark \ref{syzygy}. The elements given by (R3) are in $\ker(\tilde{q})$ by the definition of $J(R)$.

Let $\tilde{p}: F\to F/L$ be the quotient map. We show that (R1), (R2), and (R3) are in $\ker(\tilde{p})$, and hence $\tilde{p}$ factors through the quotient $J(R)$ by a map $p: J(R)\to F/L$. Note that the elements given by (R3) are in $\ker(\tilde{p})$ by the definition of $L$. It remains to verify that (R1) and (R2) are in $\ker(\tilde{p})$. 

First, consider (R1). Let $0\to P \to M \to M' \to 0$ be an exact sequence of \fg{} $R$-modules with $P$ projective. 
By (R2$'$), one has $$[M]-[M'] = [M]-t[\Omega M']$$ in $F/L$ for any syzygy $\Omega M'$ of $M'$.
Given a syzygy $\Omega M$ of $M$, Remark \ref{bigdiag} shows that there is a projective \makebox{$R$-module} $G$ such that $\Omega M \oplus G$ is a syzygy of $M'$. 
Hence $[\Omega M'] = [\Omega M \oplus G] = [\Omega M]$ in $F/L$, and thus 
\[[M]-[M'] = [M]-t[\Omega M]=0\]
in $F/L$. Therefore (R1) is in $\ker(\tilde{p})$. 

Finally, we show that (R2) is in $\ker(\tilde{p})$. Let $0\to M'\to P\to M\to 0$ be an exact sequence of \fg{} $R$-modules. Then $M'$ is a syzygy of $M$, so 
$t[M']-[M] = t[\Omega M]-[M]=0$ by (R2$'$). Hence (R2) is in $\ker(\tilde{p})$.

Note that $p\circ q$ is the identity map on $F/L$; thus $p$ is injective. Since $p$ is a quotient map and hence also surjective, $p$ is an isomorphism.
\epr

Recall that a homomorphism of rings $\vp:R\to S$ is \emph{flat} if $S$ is flat as an $R$-module via $\vp$. A straightforward argument yields the following result.
\begin{lem} \label{flat}
Let $\vp: R\to S$ be a homomorphism of rings. When $\vp$ is flat,  the assignment $[M] \mapsto [S\otimes_R M]$ induces a homomorphism of $\ztt$-modules 
\[
\pushQED{\qed} 
J(\vp): J(R) \to J(S). \qedhere
\popQED
\]

\end{lem}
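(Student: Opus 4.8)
The plan is to build the map first on the free $\ztt$-module $F_R = \ztt^{(\modr)}$ and then show it descends to $J(R)=F_R/I_R$, where $I_R$ is the submodule of Definition \ref{Jmod}. Define $\widetilde{J(\vp)}\colon F_R\to J(S)$ to be the $\ztt$-linear map determined on basis elements by $[M]\mapsto[S\otimes_R M]$; this is well defined because $S\otimes_R M$ is a \fg{} $S$-module whenever $M$ is a \fg{} $R$-module, and isomorphic $R$-modules have isomorphic base changes. It then suffices to check that the generators (R1), (R2), (R3) of $I_R$ lie in $\ker\bigl(\widetilde{J(\vp)}\bigr)$; once this is done, $\widetilde{J(\vp)}$ factors through $J(R)$, the induced map is the desired $J(\vp)$, and its $\ztt$-linearity is inherited from that of $\widetilde{J(\vp)}$.

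The three verifications rest on two standard facts about flat base change: (i) if $\vp$ is flat, then $S\otimes_R-$ carries an exact sequence of \fg{} $R$-modules to an exact sequence of \fg{} $S$-modules (right exactness of tensor is automatic; flatness supplies left exactness); and (ii) if $P$ is a \fg{} projective $R$-module, then $S\otimes_R P$ is a \fg{} projective $S$-module, since base change sends a finite free module to a finite free module and a direct summand to a direct summand. For (R1), an exact sequence $0\to P\to M\to M'\to 0$ with $P$ projective is sent to an exact sequence $0\to S\otimes_R P\to S\otimes_R M\to S\otimes_R M'\to 0$ with $S\otimes_R P$ projective, so $[S\otimes_R M]-[S\otimes_R M']$ is an element of type (R1) over $S$ and maps to $0$ in $J(S)$. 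For (R2), an exact sequence $0\to M'\to P\to M\to 0$ with $P$ projective is sent to $0\to S\otimes_R M'\to S\otimes_R P\to S\otimes_R M\to 0$, whence $[S\otimes_R M]-t[S\otimes_R M']$ is an element of type (R2) over $S$ and maps to $0$. For (R3), the natural isomorphism $S\otimes_R(M\oplus M')\cong(S\otimes_R M)\oplus(S\otimes_R M')$ shows that $[S\otimes_R(M\oplus M')]-[S\otimes_R M]-[S\otimes_R M']$ is an element of type (R3) over $S$, hence zero in $J(S)$.

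There is genuinely no delicate point here, so I do not anticipate a real obstacle; the only things to be careful about are recording why finite generation is preserved under base change (so that the classes $[S\otimes_R M]$ are defined) and invoking flatness precisely at the step where left exactness is used. One could instead argue from the presentation of $J(R)$ in Proposition \ref{altdefn}, checking that $S\otimes_R-$ sends (R1$'$), (R2$'$), (R3) into the corresponding relations over $S$; for (R2$'$) this reduces to the observation that $S\otimes_R(\Omega_R M)$ is a syzygy of $S\otimes_R M$, which again follows by applying flatness to $0\to\Omega_R M\to P\to M\to 0$.
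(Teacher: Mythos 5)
Your proof is correct and is exactly the ``straightforward argument'' the paper alludes to but omits (the lemma is stated with a \qed and no proof). You correctly identify the two points where flatness is actually used: it supplies the left exactness needed so that (R1) and (R2) relations over $R$ base change to (R1) and (R2) relations over $S$, and it is not needed for (R3), where the isomorphism $S\otimes_R(M\oplus M')\cong(S\otimes_R M)\oplus(S\otimes_R M')$ holds for any ring map. The auxiliary observations --- that base change preserves finite generation and projectivity, and that isomorphic modules have isomorphic base changes so the map on basis elements is well defined --- are stated and are the right things to say. Nothing to add.
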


\newcommand{\nm}{\mathfrak{n}}

\subsection{Finite projective dimension}
In \cite[Prop 3]{DRJ10} Jordan proves the following: if $R$ is a commutative local Noetherian ring and $M$ a finitely generated $R$-module, then the projective dimension of $M$ is finite if and only if $[M]=0$ in $J(R)$.  In Proposition \ref{finprojdim}, we extend this result to all commutative Noetherian rings.

\begin{defn}
Let $(R,\m,k)$ be a local ring with maximal ideal $\m$ and residue field $k$, and let $M$ be a \fg{} $R$-module. Set \[ \beta_i(M)= \rank_k \tor_i^R(M,k);\]
this is the \emph{$i$th Betti number} of $M$. The \textit{Poincar\'e series} of $M$ is given by \[\ds P_M^R(t) = \sum_{i=0}^\infty \beta_i(M)\,t^i\] viewed as an element in the formal power series ring $\ztt$. 
\end{defn}

Let $\z((t))$ denote the \textit{ring of formal Laurent series}, $\z[[t]]\!\left[\frac{1}{t}\right]$; we view it as a module over $\ztt$. Notice that $\ztt$ is a $\ztt$-submodule of $\z((t))$.

The following proposition is \cite[Lem 1]{DRJ10}. We include the statement here for ease of reference.

\begin{prop} \label{homompoin}
Let $R$ be a local ring. The assignment $[M]\mapsto P_M^R(t)$ induces a homomorphism of $\z[t^{\pm1}]$-modules 
\[
\pushQED{\qed} 
\pi: J(R)\to \z((t))/\ztt. \qedhere
\popQED
\] 
\end{prop}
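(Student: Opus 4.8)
The plan is to use the alternate presentation of $J(R)$ from Proposition \ref{altdefn}, namely $J(R)\cong F/L$ where $L$ is generated by (R1$'$), (R2$'$), and (R3). Since the free $\ztt$-module $F$ maps onto $\z((t))/\ztt$ by sending each generator $[M]$ to the class of $P_M^R(t)$, it suffices to check that each of the three families of generators of $L$ maps to zero in $\z((t))/\ztt$; then the induced map $\pi\colon J(R)\to\z((t))/\ztt$ is well defined and automatically $\ztt$-linear.

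For (R1$'$), if $P$ is a \fg{} projective $R$-module then $\tor_i^R(P,k)=0$ for $i>0$, so $P_P^R(t)=\beta_0(P)$ is a constant, hence lies in $\ztt\subseteq\z((t))$ and maps to $0$. For (R2$'$), I would use the standard fact that for any \fg{} module $M$ and any syzygy $\Omega M$ one has $\beta_i(\Omega M)=\beta_{i+1}(M)$ for all $i\ge 0$ (this follows by taking a minimal free resolution of $M$, or more generally from the long exact sequence in $\tor$ associated to $0\to\Omega M\to P\to M\to 0$ with $P$ free, noting $\tor_i^R(P,k)=0$ for $i>0$). Consequently $P_{\Omega M}^R(t)=\bigl(P_M^R(t)-\beta_0(M)\bigr)/t$, so $t\,P_{\Omega M}^R(t)=P_M^R(t)-\beta_0(M)$, whence $P_M^R(t)-t\,P_{\Omega M}^R(t)=\beta_0(M)\in\ztt$ maps to $0$ in $\z((t))/\ztt$. (Here one should note that if $\Omega M$ is only a syzygy up to projective summand the Poincaré series changes only by a constant, which is harmless modulo $\ztt$.) For (R3), additivity of $\tor$ over direct sums gives $\beta_i(M\oplus M')=\beta_i(M)+\beta_i(M')$ for all $i$, so $P_{M\oplus M'}^R(t)=P_M^R(t)+P_{M'}^R(t)$ exactly, and the (R3) generator maps to $0$.

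Having shown $L$ is contained in the kernel of the map $F\to\z((t))/\ztt$, the map factors through $F/L\cong J(R)$, giving the desired $\ztt$-linear homomorphism $\pi$. The main point requiring care is the behavior of Poincaré series under the possibly non-minimal and non-unique syzygy operation used to define $\Omega$ on $J(R)$ in this paper: since $\Omega M$ is only determined up to a \fg{} projective (equivalently, free) summand, $P_{\Omega M}^R(t)$ is only determined up to adding a non-negative integer constant, but this ambiguity vanishes in the target $\z((t))/\ztt$, so the argument goes through regardless of the choice of syzygies. No step here is a genuine obstacle; the result is essentially bookkeeping about Betti numbers once the right presentation of $J(R)$ is in hand.
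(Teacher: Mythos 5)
The paper does not prove this proposition; it is quoted from \cite[Lem 1]{DRJ10} with a \qed marker and no argument, so there is no in-text proof to compare against. Your proof is correct and self-contained: the verification that (R1$'$), (R2$'$), (R3) land in $\ztt$ is exactly the right bookkeeping, and you correctly flag the one point that could trip someone up, namely that $\Omega M$ is only defined up to a free summand, which shifts $P_{\Omega M}^R(t)$ by a constant and so is invisible in $\z((t))/\ztt$. Two small remarks. First, invoking Proposition \ref{altdefn} is a convenience rather than a necessity: one can check (R1) and (R2) from Definition \ref{Jmod} directly via the long exact sequence in $\tor^R(-,k)$, since a projective $P$ has $\tor_i^R(P,k)=0$ for $i\ge 1$, so each of those relations produces a difference of Poincar\'e series lying in $\ztt$. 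Second, implicit in your argument (and worth stating) is that the assignment on the free module $F=\bigoplus \ztt[M]$ is automatically $\ztt$-linear because you define it on a $\ztt$-basis; linearity of the induced $\pi$ then follows once the generators of $L$ are shown to die.
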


\begin{defn}
An $R$-module $M$ has \textit{finite projective dimension} if an $i$th syzygy module $\Omega^ i M$ is projective for some $i\ge 0$; in this case, we write $\pd_R M <\infty$.
\end{defn}

By Schanuel's Lemma, an $i$th syzygy module is projective if and only if every $i$th syzygy module is projective. Observe that if $\Omega^i M$ is projective, then $\Omega^j M$ is projective for all $j\ge i$. When $R$ is local, an $R$-module $M$ has finite projective dimension if and only if $\beta_i(M)=0$ for $i\gg 0$; see \cite[Cor 1.3.2]{BH93}.

The following proposition was proved in \cite[Prop 3]{DRJ10} for local rings.

\begin{prop} \label{finprojdim}
Let $R$ be a commutative Noetherian ring and $M$ a finitely generated $R$-module. Then $[M]=0$ in $J(R)$ if and only if the projective dimension of $M$ is finite.
\end{prop}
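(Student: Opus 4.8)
The plan is to establish the two implications separately. The implication $\pd_R M < \infty \Rightarrow [M] = 0$ needs nothing new: if some syzygy $\Omega^n M$ is projective, then $[\Omega^n M] = 0$ in $J(R)$ by Remark \ref{firstrmks}.(\ref{rmk1}), and Remark \ref{syzygy} gives $[M] = t^n[\Omega^n M] = 0$. All the work is in the converse, which I would reduce to the local case --- there the statement is Jordan's \cite[Prop 3]{DRJ10} --- by localizing at each maximal ideal and then passing to a uniform bound.

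So suppose $[M] = 0$ in $J(R)$ and fix a maximal ideal $\m$. The localization map $R \to R_\m$ is flat, so Lemma \ref{flat} supplies a $\ztt$-linear map $J(R) \to J(R_\m)$ carrying $[M]$ to $[M_\m]$; thus $[M_\m] = 0$ in $J(R_\m)$. As $R_\m$ is local, \cite[Prop 3]{DRJ10} yields $\pd_{R_\m} M_\m < \infty$. (If one prefers to avoid the black box: apply the Poincar\'{e}-series homomorphism $\pi$ of Proposition \ref{homompoin}, so that $[M_\m] = 0$ forces $P_{M_\m}^{R_\m}(t) \in \ztt$; being a formal power series, it is then a polynomial, so $\beta_i(M_\m) = 0$ for $i \gg 0$ and $\pd_{R_\m} M_\m < \infty$ by \cite[Cor 1.3.2]{BH93}. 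One should also remark --- as is done after Remark \ref{firstrmks} --- that the module of \cite{DRJ10} coincides with $J(R)$ of Definition \ref{Jmod}, so the citation applies verbatim.)

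It remains to upgrade ``$\pd_{R_\m} M_\m < \infty$ for every maximal ideal $\m$'' to ``$\pd_R M < \infty$''. I would fix a resolution of $M$ by finitely generated free $R$-modules, let $N_n$ be the $n$th syzygy in it, and consider the loci $V_n = \{\mathfrak{p} \in \spec R : (N_n)_\mathfrak{p} \text{ is not free over } R_\mathfrak{p}\}$. Each $V_n$ is Zariski-closed (the free locus of a finitely generated module over a Noetherian ring is open), $V_0 \supseteq V_1 \supseteq \cdots$, and $(N_n)_\mathfrak{p}$ is free precisely when $\pd_{R_\mathfrak{p}} M_\mathfrak{p} \le n$; so by the preceding paragraph $\bigcap_n V_n$ contains no maximal ideal. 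A nonempty Zariski-closed subset of $\spec R$ always contains a maximal ideal, so $\bigcap_n V_n = \emptyset$, and since $\spec R$ is quasi-compact and the complements $\spec R \setminus V_n$ increase, $V_N = \emptyset$ for some $N$; then $N_N$ is projective and $\pd_R M \le N$. This last step is the only genuine obstacle: flat base change delivers finiteness of projective dimension at each maximal ideal only pointwise, and the nested-closed-sets-plus-quasi-compactness argument is exactly what converts this into the uniform bound needed for $\pd_R M < \infty$.
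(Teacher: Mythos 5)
Your argument is correct and follows the paper's structure exactly: the easy implication via Remark \ref{syzygy}, the reduction to the local case via Lemma \ref{flat} applied to $R \to R_\m$, and the local case via the Poincar\'{e}-series map of Proposition \ref{homompoin} (equivalently, Jordan's result). The one place you diverge is the final local-to-global step: the paper simply cites \cite[Thm 4.5]{BM67} to conclude that $\pd_{R_\m} M_\m < \infty$ for all maximal $\m$ implies $\pd_R M < \infty$, whereas you give a self-contained proof via the free loci $V_n$ of the syzygies $N_n$. That argument is sound: each $V_n$ is closed because the free locus of a finitely presented module is open; the chain is descending because over a local ring a syzygy of a free module is free; no maximal ideal lies in $\bigcap_n V_n$ because $\pd_{R_\m} M_\m < \infty$ forces $(N_n)_\m$ to be free for $n$ large; and a nonempty closed subset of $\spec R$ contains a maximal ideal, so $\bigcap_n V_n = \emptyset$, whence quasi-compactness (or just the Noetherianity of $\spec R$, which makes the descending chain stabilize) gives $V_N = \emptyset$ for some $N$, and $N_N$ is then a finitely generated locally free, hence projective, module. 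The trade-off is purely expository: the paper's citation is shorter, yours makes the local-to-global mechanism visible and avoids reliance on a reference whose statement the reader must look up.
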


\bpr  
If the projective dimension of $M$ is finite, then $[\Omega^n M] = 0$ for some $n\in\n$.  Hence $[M]=0$ by Remark \ref{syzygy}. 

Suppose $[M] = 0$ in $J(R)$. First, we consider the case when $R$ is local. Using the homomorphism $\pi$ from Proposition \ref{homompoin}, one finds that $P_R(M)\in \ztt$. Hence $P_R(M)$ is a polynomial, and it follows that $\beta_i(M) = 0$ for $i \gg 0$. Thus the projective dimension of $M$ is finite. 

For a general ring $R$, the map $R\to R_\m$ is flat for each maximal ideal $\mathfrak{m}.$  Lemma \ref{flat} gives a homomorphism $J(R) \to J(R_\mathfrak{m})$ with  $[M] \mapsto [M_\mathfrak{m}]$. Thus $[M_\mathfrak{m}]=0$ in $J(R_\mathfrak{m})$, and hence $\pd_{R_\mathfrak{m}} M_\mathfrak{m} < \infty$. Hence the projective dimension of $M$ over $R$ is finite by \cite[Thm 4.5]{BM67}.
\epr


\section{MCM modules over Gorenstein local rings} \label{gorrings1}

In this section, we collect known, but hard to document, properties of MCM modules over Gorenstein local rings.

For the remainder of this article, let $R$ be a local ring with residue field $k$ and $M$ a \fg{} $R$-module. Set $(-)^*=\hom_R(-,R)$. 

A \emph{free cover} of $M$ \cite[Def 5.1.1]{EJ00} is a homomorphism $\vp: G \to M$ with $G$ a free $R$-module such that 
\begin{enumerate}
\item for any homomorphism $g: G'\to M$ with $G'$ free there exists a homomorphism $f: G'\to G$ such that $g = \vp f$, and 
\item any endomorphism $f$ of $G$ with $\vp = \vp f$ is an automorphism.
\end{enumerate}
A free cover is unique up to isomorphism.

Let $\nu_R(M)$ denote the minimal number of generators of an $R$-module $M$, i.e., $\nu_R(M) = \rank_k (k\otimes_R M)$.

\begin{rmk} \label{coverexists}
Every $R$-module admits a free cover. A homomorphism $\vp: R^n \to M$ is a free cover of $M$ if and only if $\vp$ is surjective and $n=\nu_R(M)$.
\end{rmk}

A \emph{free envelope} of $M$ \cite[Def 6.1.1]{EJ00} is a homomorphism $\vp: M \to G$ with $G$ a free $R$-module such that
\begin{enumerate}
\item for any homomorphism $g: M\to G'$ with $G'$ free there exists a homomorphism $f: G\to G'$ such that $g = f\vp$, and 
\item any endomorphism $f$ of $G$ with $\vp = f \vp$ is an automorphism of $G$.
\end{enumerate}
A free envelope is unique up to isomorphism. 

\begin{rmk} \label{freeenv} Every \fg{} $R$-module $M$ admits a free envelope. Indeed, the homomorphism $f = (f_1,\dots, f_n): M\to R^n$, where $f_1,\dots,f_n$ is a minimal system of generators of $M^*$, is a free envelope of $M$. 

The free envelope of $M$ can also be constructed as follows. Let $R^n\onto M^*$ be the free cover of $M^*$. Applying $(-)^*$ to this map, one has an injection $M^{**}\to R^n$. The composite map $M\to M^{**} \to R^n$ is the free envelope of $M$, where $M\to M^{**}$ is the natural biduality map.\end{rmk}

\begin{rmk}
For a local ring $R$, one can choose $\Omega M$ so that it is unique up to isomorphism by selecting $\Omega M = \ker(\vp)$ for a free cover $\vp$ of $M.$ Hence from this section on, $\Omega(-)$ is well-defined, up to isomorphism, on the category of $R$-modules. 
\end{rmk}

\begin{defn}
The \emph{cosyzygy module} of $M$ is $\Omega^{-1}_R M = \coker (\vp)$, where $\vp$ is the free envelope of $M$. For $n>1$, the \textit{nth cosyzygy module} of $M$ is $$\Omega^{-n}_R M = \Omega^{-1}_R (\Omega^{-(n-1)}_R M).$$
\end{defn}

In \cite[Sect 8.1]{EJ00}, the authors refer to the cosyzygy module as the \emph{free cosyzygy module}; since this is the only cosyzygy module studied in this article, we simply call it the cosyzygy module. We note that, when the module $M$ is torsion-free, the cosyzygy module is also called the \emph{pushforward}; see \cite{HJW01}. 

\subsection{Maximal Cohen-Macaulay modules}
A non-zero \makebox{$R$-module} $M$ is said to be \textit{\mcm{}} (abbreviated to MCM) if  $\depth_R M = \dim R$. 

If $R$ is Cohen-Macaulay, the set of isomorphism classes of MCM, non-free, indecomposable modules generates $J(R)$ as a module over $\ztt$ since \makebox{\cite[Prop 1.2.9]{BH93}} implies that each $R$-module has a syzygy that is either MCM or zero. 
If $R$ is Gorenstein and has the Krull-Remak-Schmidt property, one can do better: $J(R)$ is generated over $\z$ by the isomorphism classes of MCM, non-free, indecomposable modules; see Theorem \ref{structurethm}. 

The ring $R$ is \emph{Gorenstein} if it has finite injective dimension as a module over itself. Equivalently, $R$ is Gorenstein provided it is Cohen-Macaulay and $\ext^i_R(M,R)=0$ for all MCM modules $M$ and all $i\ge 1;$ this equivalence can be seen from \cite[Satz 2.6]{FI69} and \makebox{\cite[Prop 3.1.10]{BH93}.} 

For the remainder of this article, we focus on Gorenstein rings. The following are well-known results on MCM modules that will be used throughout the paper; for lack of adequate references, some of the proofs are given here. 

\begin{rmk} \label{gormcmrmks} Let $R$ be a Gorenstein local ring and $M$ an MCM $R$-module. 
\begin{enumerate}
\item \label{dMCM} 
Let $N$ be an $R$-module. If $d\ge \dim R$, then $\Omega^d N$ is MCM or zero.

\item \label{doubledual}
The natural homomorphism $M\to M^{**}$ is an isomorphism.

\item \label{freeenvinj}
The free envelope of $M$ is an injective homomorphism.

\item \label{cosyzMCM}
The modules $\Omega M$ and $\Omega^{-1}M$ are MCM.

\item \label{syzdual} 
$(\Omega^{-1}M)^* \cong \Omega (M^*)$.

\item \label{indec}
If $M$ is indecomposable, then $\Omega M$ and $\Omega^{-1}M$ are also indecomposable.

\item \label{nofreesmds} 
If $M$ has no free summands, then the modules $\Omega M$ and $\Omega^{-1}M$ also have no free summands. 

\item \label{ddiso} If $M$ has no free summands, then $\Omega^{-n} \Omega^n M \cong M$ for all $n\in\z$. 
\end{enumerate}
\end{rmk}

Property (\ref{dMCM}) follows from the Depth Lemma \cite[Prop 1.2.9]{BH93}. Property (\ref{doubledual}) is proved in \cite[Cor 2.3]{WV68}. For (\ref{cosyzMCM}), a proof that $\Omega M$ is an MCM module is given in \makebox{\cite[Lem 1.3]{JH78}} and \cite[Prop 1.6.(2)]{HJW01} shows that $\Omega^{-1}M$ is MCM. 

\bpr [Proof of \emph{(\ref{freeenvinj})}] 
The free envelope of $M$ is the composition
$$
M \to M^{**} \into F^{*},
$$
where $F\onto M^*$ is the free cover of $M^*$. So (\ref{freeenvinj}) follows from (\ref{doubledual}).
\renewcommand{\qedsymbol}{}
\epr 

\bpr[Proof of \emph{(\ref{syzdual})}]
Let $\pi: F\to M^*$ be the free cover of $M^*$. Since the natural map $M\to M^{**}$ is an isomorphism, $\pi^*: M \to F^*$ is the free envelope of $M$ by Remark \ref{freeenv}. Thus $\Omega^{-1}M$ is defined by an exact sequence 
$$0\too M \stackrel{\pi^*}{\too} F^* \too \Omega^{-1} M \too 0.$$
Applying $(-)^*$ to this sequence yields the exact sequence
$$0\too (\Omega^{-1} M)^* \too F\stackrel{\pi}{\too} M^* \too 0.$$
As $\pi$ is the free cover of $M^*$, one gets $\Omega (M^*) \cong (\Omega^{-1}M)^*.$
\renewcommand{\qedsymbol}{}
\epr

\bpr[Proof of \emph{(\ref{indec})}]
A proof that $\Omega M$ is indecomposable is given in \cite[Lem 1.3]{JH78}. We prove that $\Omega^{-1}M$ is indecomposable. Let $G$ be the free envelope of $M$. By \eqref{freeenvinj}, the following sequence is exact:   
$$0\to M\to G\to \Omega^{-1}M\to 0.$$
Since $\Omega^{-1} M$ is MCM, applying $(-)^*$ to this sequence yields the exact sequence
$$0\to (\Omega^{-1}M)^*\to G^*\to M^*\to 0.$$
By (\ref{syzdual}), $(\Omega^{-1}M)^* \cong \Omega (M^*).$ Since $M$ is indecomposable and isomorphic to $M^{**}$, it follows that $M^*$ is indecomposable. As $M^*$ is an indecomposable MCM module, $(\Omega^{-1}M)^*$ is indecomposable by the result for syzygies. Thus $\Omega^{-1}M$ is also indecomposable.
\renewcommand{\qedsymbol}{}
\epr

\bpr[Proof of \emph{(\ref{nofreesmds})}]
Suppose $\Omega M \cong N\oplus R$. Let $G$ be the free cover of $M$, and let $X$ be the pushout of $N\oplus R \onto R$ and $N\oplus R \to G$. Then we have the following commutative diagram with exact rows.

\begin{center} \hspace{0pt}
\xymatrix{
0 \ar[r] & N\oplus R \ar[r] \ar@{->>}[d] & G \ar[r]  \ar@{->>}[d] & M \ar@{=}[d] \ar[r] & 0\\
0 \ar[r] & R \ar[r] & X \ar[r] & M \ar[r] &0
}
\end{center}
Since $M$ is MCM one has $\ext_R^1(M,R)=0$, so $X\cong M\oplus R$.
Then $\nu_R(G) \ge \nu_R(M) + 1$ as $G$ maps onto $M\oplus R$. However, this is a contradiction since $G$ is the free cover of $M$. Hence $\Omega M$ has no free summand.

Since $M$ has no free summand, $M^*$ has no free summand. By property (\ref{syzdual}) and the result for syzygies, $(\Omega^{-1}M)^*$ has no free summand. 
Hence $\Omega^{-1}M$ also has no free summand. 
\renewcommand{\qedsymbol}{}
\epr

\bpr[Proof of \emph{(\ref{ddiso})}]
First, note that $M\cong \Omega (\Omega^{-1} M) \oplus F'$ for some free module $F'$ by Schanuel's Lemma. Since $M$ has no free summands, $M \cong \Omega (\Omega^{-1} M)$. 
Next, we show that $\Omega^{-1}(\Omega M) \cong M$; the result then follows by induction on $n$. 

Let $\Omega M\to G$ be the free envelope of $\Omega M$, and let $G'\to M$ be the free cover of $M$. We have the following commutative diagram.

\begin{center} \hspace{0pt}
\xymatrixrowsep{.3in}
\xymatrixcolsep{.3in}
\xymatrix{
0\ar[r] & \Omega M \ar[r]^{i} & G \ar[r] & \Omega^{-1} (\Omega M) \ar[r] & 0\\
0 \ar[r] & \Omega M \ar@{=}[u] \ar[r]^{j} & G'\ar@{->>}[u]_{f} \ar[r] & M \ar@{->>}[u] \ar[r]  & 0\\
 & &K \ar@{^{(}->}[u]\ar[r]^{\cong} & K \ar@{^{(}->}[u]}
\end{center}
Indeed, since $M$ is MCM there are maps $f: G'\to G$ and $g:G\to G'$ such that $f \circ j = i$ and $g \circ i = j$. Then $i = i \circ (f\circ g)$, and $f\circ g$ is an isomorphism since $i$ is the free envelope of $\Omega M$. Hence $f: G'\to G$ is surjective. Thus the map $M\to \Omega^{-1}(\Omega M)$ is also surjective. Note that the kernels of $G'\onto G$ and $M\onto \Omega^{-1}(\Omega M)$ are isomorphic by the Snake Lemma. Since $\Omega^{-1}(\Omega M)$ is MCM and $K$ is free,  $M\cong \Omega^{-1} (\Omega M) \oplus K$.
Since $M$ has no free summands, $M\cong \Omega^{-1}(\Omega M).$
\epr

\begin{prop} \label{Jmodmcm} Let $R$ be a Gorenstein local ring and $M$ a finitely generated \makebox{$R$-module.}\begin{enumerate}
\item \label{cosyzeq} If $M$ is an MCM module, then $t^{-n}[\Omega^{-n} M] = [M]$ in $J(R)$ for each $n\in\z$. 
\item \label{CosyzOfSyz} There is an MCM $R$-module $N$ with $[M]=[N]$ in $J(R)$. 
\end{enumerate}
\end{prop}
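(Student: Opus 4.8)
The plan is to establish part (1) first and then derive part (2) from it by dimension shifting.

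For part (1), I would split into the cases $n<0$, $n=0$, and $n\ge 1$. When $n=0$ the statement is trivial, and when $n<0$, writing $n=-m$ with $m\ge 1$, the assertion $t^{-n}[\Omega^{-n}M]=[M]$ becomes $t^{m}[\Omega^{m}M]=[M]$, which is exactly Remark \ref{syzygy}. The substance is the case $n\ge 1$. The key point is that, since $M$ is MCM, its free envelope $M\to G$ is injective by Remark \ref{gormcmrmks}.(\ref{freeenvinj}), so there is a short exact sequence $0\to M\to G\to \Omega^{-1}M\to 0$ with $G$ free. This is precisely a sequence of the form appearing in relation (R2) of Definition \ref{Jmod}, with the submodule $M$ in the role of ``$M'$'' and $\Omega^{-1}M$ in the role of ``$M$''; hence $[\Omega^{-1}M]=t[M]$, equivalently $t^{-1}[\Omega^{-1}M]=[M]$, in $J(R)$. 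Since $\Omega^{-1}M$ is again MCM by Remark \ref{gormcmrmks}.(\ref{cosyzMCM}), I can iterate this: applying the same reasoning to $\Omega^{-(j-1)}M$ gives $[\Omega^{-j}M]=t[\Omega^{-(j-1)}M]$, and an induction on $j$ yields $[\Omega^{-n}M]=t^{n}[M]$, i.e. $t^{-n}[\Omega^{-n}M]=[M]$, for all $n\ge 1$. Note that this argument does not require $M$ to have no free summands, so there is no need to invoke Remark \ref{gormcmrmks}.(\ref{ddiso}); relation (R2) does all the work.

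For part (2), set $d=\dim R$. If $M$ has finite projective dimension, then $[M]=0=[R]$ in $J(R)$ by Proposition \ref{finprojdim}, and $R$ is itself an MCM module since $R$ is Cohen-Macaulay, so $N=R$ works. Otherwise, by Remark \ref{gormcmrmks}.(\ref{dMCM}) the module $\Omega^{d}M$ is MCM or zero, and it cannot be zero (otherwise $M$ would have finite projective dimension), so $\Omega^{d}M$ is MCM; then $N:=\Omega^{-d}\Omega^{d}M$ is MCM by applying Remark \ref{gormcmrmks}.(\ref{cosyzMCM}) $d$ times. By Remark \ref{syzygy}, $[M]=t^{d}[\Omega^{d}M]$, and by part (1), applied with $\Omega^{d}M$ in place of $M$ and $n=d$, we get $[\Omega^{d}M]=t^{-d}[\Omega^{-d}\Omega^{d}M]=t^{-d}[N]$. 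Combining these, $[M]=t^{d}\cdot t^{-d}[N]=[N]$ in $J(R)$.

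I do not anticipate a serious obstacle; the points requiring care are the bookkeeping of powers of $t$ and recognizing that the free-envelope sequence is literally an instance of (R2), rather than trying to push the identity through the isomorphism $\Omega^{-n}\Omega^{n}M\cong M$, which would need a free-summand hypothesis on $M$. The separate treatment of the finite projective dimension case in part (2) is needed precisely because MCM modules are required to be non-zero, so $\Omega^{-d}\Omega^{d}M$ need not be MCM when it is zero.
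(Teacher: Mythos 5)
Your proof is correct and follows essentially the same route as the paper: for part~(1) the $n\le 0$ case is Remark~\ref{syzygy}, and for $n\ge 1$ you observe that the free envelope of an MCM module is injective (Remark~\ref{gormcmrmks}.(\ref{freeenvinj})), so the sequence $0\to M\to G\to\Omega^{-1}M\to 0$ is a literal instance of (R2), and then iterate using Remark~\ref{gormcmrmks}.(\ref{cosyzMCM}); for part~(2) you combine Remark~\ref{syzygy} with part~(1) applied to $\Omega^d M$. The one place you improve on the paper's terse argument is part~(2): the paper simply writes $[M]=[\Omega^{-d}\Omega^d M]$ and implicitly takes this to be the required MCM module, but if $\pd_R M<\infty$ then $\Omega^d M=0$ and part~(1) does not apply (MCM modules are nonzero by definition); your separate case, taking $N=R$ when $\pd_R M<\infty$ and using Proposition~\ref{finprojdim}, closes that small gap cleanly.
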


\bpr
Remark \ref{syzygy} showed (1) for $n\le 0$. Using Remark \ref{gormcmrmks}.(\ref{freeenvinj}), a proof similar to that of Remark \ref{syzygy} yields the desired result.

For (2), let $d=\dim R$.  By Remark \ref{syzygy}, $t^d[\Omega^{d} M] =[M].$ By (\ref{cosyzeq}), $$t^{-d}[\Omega^{-d}\Omega^d M] = [\Omega^d M].$$  Hence $[M] = [\Omega^{-d}\Omega^d M]$.  
\epr

\section{Gorenstein local rings: structure of $J(R)$}

The main result of this section is a structure theorem for $J(R)$ when $R$ is a Gorenstein local ring; see Theorem \ref{structurethm}. 


Throughout this section $R$ will be a Gorenstein local ring. Recall that $\Omega^n(-)$ denotes the $n$th (co)syzygy module, which is well-defined up to isomorphism.

Recall that a local ring $R$ is said to have the \textit{Krull-Remak-Schmidt property} if the following condition holds:  given an isomorphism of \fg{} $R$-modules $$\ds \bigoplus_{i=1}^m M_i \,\, \cong \,\,\bigoplus_{j=1}^n N_j$$ 
where $M_i$ and $N_j$ are indecomposable and non-zero, $m=n$ and, after renumbering if necessary, $M_i \cong N_i$ for each $i.$

Henselian local rings, and in particular complete local rings, have the Krull-Remak-Schmidt property; see \cite[Thm 1.8]{LW12} and \cite[Cor 1.9]{LW12}.

\begin{rmk}\label{ZG}
In order to set up notation for the next theorem, we first discuss a special type of \makebox{$\ztt$-module.} For this, we view $\ztt$ as the group algebra  over $\z$ of the free group $G=\langle t \rangle$ on a single generator $t$; that is, $G\cong (\z,+)$. Let $X$ be a set with a $G$-action. Let $\z X = \z^{(X)},$ the free $\z$-module with basis given by the elements of $X$, and let $\z G$ be the group algebra over $\z$ of $G$. Then $\z X$ is naturally a $\z G$-module \cite[Ch.III, \S1]{SL02}.
\end{rmk}

In what follows, we let 
\[
\mr =
\left\{[M]\in \modr
 \left|
\begin{gathered}
\text{$M$ is MCM, non-free,}\\
\text{ and indecomposable}
\end{gathered}
\right.\right\}.
 \]
When $R$ is clear from context, we write $\mathcal{M}$ for $\mr$. 


Remark \ref{gormcmrmks} properties (\ref{cosyzMCM}), (\ref{indec}), and (\ref{nofreesmds}) imply that $[\Omega M]$ and $[\Omega^{-1} M]$ are in $\mnor$ if \makebox{$[M]\in\mnor$.} Thus there is an action of $G$ on $\mnor$ with $t[M] = [\Omega^{-1} M]$ and \makebox{$t^{-1}[M] = [\Omega M]$.} Let $\mathcal A = \z^{(\mathcal{M})}$ be the corresponding $\ztt$-module. The canonical map $\mathcal{M} \to J(R)$ induces a $\ztt$-linear homomorphism: \[\ds \Phi: \mathcal{A} \to J(R).\] 
Assume $R$ has the Krull-Remak-Schmidt property. We define a $\z[t^{\pm1}]$-linear homorphism
\begin{equation} \label{psi}
\psi: \bigoplus_{[M] \in \modnor}  \ztt[M] \hspace{.1in} \too \mathcal{A}
\end{equation}
by setting $\psi([M]) = \sum_{i=1}^n [M_i],$ where each $M_i$ is indecomposable and 
$$\Omega^{-(d+1)}\Omega^{d+1} M \cong \bigoplus_{i=1}^n M_i$$
with $d = \dim R$.
Since $\Omega^{-(d+1)}\Omega^{d+1} M$ is either zero or MCM with no free summands,
$ \sum_{i=1}^n [M_i]$ is indeed in 
$\mathcal{A}$.
Since $R$ has the \krs{} property, $\psi$ is well-defined. 

\begin{thm}\label{structurethm}
Let $R$ be a Gorenstein local ring that has the \krs{} property. 
Then the $\ztt$-linear map
$$\Phi: \mathcal{A} \to J(R)$$
is an isomorphism with inverse $\Psi$ induced by $\psi$ described in \eqref{psi}.
\end{thm}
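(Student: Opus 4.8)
The plan is to build the inverse $\Psi$ directly from $\psi$ and then check that both composites $\Psi\circ\Phi$ and $\Phi\circ\Psi$ are identity maps. By Proposition~\ref{altdefn} it suffices to show that the generators (R1$'$), (R2$'$), (R3) of the relation submodule all lie in $\ker\psi$; then $\psi$ factors through $J(R)$, giving the $\ztt$-linear map $\Psi\colon J(R)\to\mathcal{A}$. Relation (R1$'$) is immediate: if $P$ is projective (hence free, as $R$ is local) then $\Omega^{d+1}P=0$ since $d+1\ge 1$, so $\psi([P])=0$. Relation (R3) holds because both the syzygy and the cosyzygy commute with finite direct sums, whence $\Omega^{-(d+1)}\Omega^{d+1}(M\oplus M')\cong\Omega^{-(d+1)}\Omega^{d+1}M\oplus\Omega^{-(d+1)}\Omega^{d+1}M'$, and the \krs{} property makes the resulting decompositions into indecomposables combine additively.

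The substance is in relation (R2$'$): I must show $\psi([M])=t\,\psi([\Omega M])$. Unwinding the definitions of $\psi$ and of the $G$-action on $\mathcal{M}$, and using that $\Omega^{-1}$ carries MCM non-free indecomposables to MCM non-free indecomposables (Remark~\ref{gormcmrmks}.(\ref{cosyzMCM}),(\ref{indec}),(\ref{nofreesmds})), one sees that $t\,\psi([\Omega M])$ is the sum of the indecomposable summands of $\Omega^{-(d+2)}\Omega^{d+2}M$, while $\psi([M])$ is the analogous sum for $\Omega^{-(d+1)}\Omega^{d+1}M$. So, by the \krs{} property, (R2$'$) reduces to the isomorphism
\[
\Omega^{-(d+1)}\Omega^{d+1}M\;\cong\;\Omega^{-(d+2)}\Omega^{d+2}M.
\]
To prove it, I would first note that $\Omega^{d+1}M$ has no free summands: writing $\Omega^d M\cong N\oplus R^{a}$ with $N$ having no free summands, one gets $\Omega^{d+1}M\cong\Omega N$, which has no free summands by Remark~\ref{gormcmrmks}.(\ref{nofreesmds}) (and is MCM or zero by Remark~\ref{gormcmrmks}.(\ref{dMCM})). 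Then Remark~\ref{gormcmrmks}.(\ref{ddiso}), applied to $L=\Omega^{d+1}M$, gives $\Omega^{-1}\Omega L\cong L$, and shifting cosyzygies yields the displayed isomorphism. This produces the map $\Psi$.

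Next I would verify $\Psi\circ\Phi=\mathrm{id}_{\mathcal{A}}$: since $\mathcal{A}$ is free over $\z$ on $\mathcal{M}$ and the composite is additive, it is enough to evaluate on a class $[M]$ with $M$ MCM, non-free, indecomposable, where $\Phi([M])=[M]$ in $J(R)$ and $\Psi([M])=\psi([M])$ equals the indecomposable expansion of $\Omega^{-(d+1)}\Omega^{d+1}M\cong M$ (Remark~\ref{gormcmrmks}.(\ref{ddiso})), i.e.\ $\psi([M])=[M]$. For $\Phi\circ\Psi=\mathrm{id}_{J(R)}$, note that $J(R)$ is generated over $\ztt$ by the classes $[M]$, $[M]\in\mathcal{C}$, and the composite is $\ztt$-linear, so it suffices to show $\Phi\Psi([M])=[M]$ for every \fg{} $R$-module $M$. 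Writing $\Omega^{-(d+1)}\Omega^{d+1}M\cong\bigoplus_i M_i$ with $M_i$ indecomposable, relation (R3) in $J(R)$ gives $\Phi\Psi([M])=\sum_i[M_i]=[\Omega^{-(d+1)}\Omega^{d+1}M]$, so the point is that $[\Omega^{-(d+1)}\Omega^{d+1}M]=[M]$ in $J(R)$. By Remark~\ref{gormcmrmks}.(\ref{dMCM}), $\Omega^{d+1}M$ is MCM or zero. If it is MCM, combine $[M]=t^{d+1}[\Omega^{d+1}M]$ (Remark~\ref{syzygy}) with $t^{-(d+1)}[\Omega^{-(d+1)}\Omega^{d+1}M]=[\Omega^{d+1}M]$ (Proposition~\ref{Jmodmcm}.(\ref{cosyzeq})). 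If $\Omega^{d+1}M=0$, then $\Omega^d M$ is free, so $\pd_R M<\infty$ and both classes vanish by Proposition~\ref{finprojdim}. Hence $\Phi$ is an isomorphism with inverse $\Psi$.

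The step I expect to be the main obstacle is showing that $\psi$ respects (R2$'$), i.e.\ the stabilization isomorphism $\Omega^{-(d+1)}\Omega^{d+1}M\cong\Omega^{-(d+2)}\Omega^{d+2}M$; the difficulty lies entirely in the careful bookkeeping of free summands, in particular the need to go one step beyond $\dim R$ so that the relevant syzygy has no free summands before Remark~\ref{gormcmrmks}.(\ref{ddiso}) can be applied. Everything else is unwinding definitions and citing the facts already assembled in Sections~\ref{Jmodsection} and~\ref{gorrings1}.
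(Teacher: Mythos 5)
Your proof is correct and takes essentially the same route as the paper's, with two small organizational differences. The paper verifies directly that the generators (R1), (R2), (R3) of the original presentation lie in $\ker\psi$ (invoking Remark~\ref{bigdiag} and Schanuel to reduce to the syzygy statement), whereas you route through Proposition~\ref{altdefn} and check the simpler generators (R1$'$), (R2$'$), (R3); the content is the same, since (R2$'$) already captures the substance of (R1) and (R2). The paper also concludes by checking $\Psi\circ\Phi=\mathrm{id}$ and then proving surjectivity of $\Phi$ via Proposition~\ref{Jmodmcm}.(\ref{CosyzOfSyz}), whereas you verify $\Phi\circ\Psi=\mathrm{id}$ explicitly with the same two ingredients (Remark~\ref{syzygy} and Proposition~\ref{Jmodmcm}.(\ref{cosyzeq})); these are equivalent. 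In both arguments the key computation is the stabilization isomorphism $\Omega^{-(d+1)}\Omega^{d+1}M\cong\Omega^{-(d+2)}\Omega^{d+2}M$ obtained from Remark~\ref{gormcmrmks}.(\ref{ddiso}), and you are in fact a bit more explicit than the paper in justifying the hypothesis of (\ref{ddiso}) --- that $\Omega^{d+1}M$ has no free summands --- by splitting off free summands of $\Omega^d M$ and applying (\ref{nofreesmds}). This extra care is correct and welcome.
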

 
\bpr
To show that $\psi$ induces a $\ztt$-linear map $\Psi: J(R) \to \mathcal{A},$
it suffices to show that the elements described in (R1), (R2), and (R3) from Definition \ref{Jmod} are in the kernel of $\psi$. 

For elements given by (R3), note that 
$$\Omega^{-(d+1)}(\Omega^{d+1} (M\oplus N)) \cong \Omega^{-(d+1)}\Omega^{d+1} M  \oplus \Omega^{-(d+1)}\Omega^{d+1} N.$$
Then $\psi([M\oplus N]) = \psi([M]) + \psi([N])$, and hence (R3) is in $\ker( \psi)$. 

Next, we consider (R2): given an exact sequence $0\to M'\to P\to M\to 0$ of finitely generated $R$-modules with $P$ projective, we show that $\psi([M'])=\psi(t^{-1}[M])$. By Schanuel's Lemma there exists a free $R$-module $G$ such that \makebox{$M'\cong \Omega M \oplus G$.} Since $\psi([G]) = 0$, we have \makebox{$\psi([M']) = \psi([\Omega M])$} in $J(R)$. Next, we show that \makebox{$\psi([\Omega M]) = \psi(t^{-1}[M])$.} By Remark \ref{gormcmrmks}.(\ref{ddiso}), 
$$\Omega^{-(d+1)}\Omega^{d+1}(\Omega M) \cong \Omega( \Omega^{-(d+2)}\Omega^{d+2} M) \cong \Omega (\Omega^{-(d+1)}\Omega^{d+1} M).$$
Note that $\Omega^{-(d+1)}\Omega^{d+1}(\Omega M)$ determines $\psi([\Omega M])$ and $\Omega (\Omega^{-(d+1)}\Omega^{d+1} M)$ determines $\psi(t^{-1}[M])$. Hence $\psi([\Omega M]) = \psi(t^{-1}[M])$, and therefore (R2) is in $\ker (\psi)$. 

It remains to verify that (R1) is in $\ker( \psi).$ Let $0\to P \to M \to M'\to 0$ be an exact sequence of $R$-modules with $P$ projective. By Remark \ref{bigdiag}, there are free $R$-modules $G$ and $G'$ such that $\Omega M \oplus G \cong \Omega M' \oplus G'$. 
Since $\psi([G]) =\psi([G'])=0$, $\psi([\Omega M] = \psi([\Omega M'])$. As (R2) is in $\ker(\psi)$, one finds that $\psi(t^{-1}[M]) = \psi(t^{-1}[M'])$ and thus $\psi([M]) = \psi([M']).$ Hence (R1) is in $\ker(\psi)$. 

Thus $\psi$ factors through the quotient $J(R)$ via a homomorphism 
$\Psi: J(R) \to \mathcal{A}.$ Notice that $\Psi \circ \Phi$ is the identity.
Indeed, if $M$ is an MCM module with no free summands, then $\Omega^{-(d+1)}\Omega^{d+1} M\cong M$ by Remark \ref{gormcmrmks}.(\ref{ddiso}). Hence $\Phi$ is injective. For each $R$-module $N$, Proposition \ref{Jmodmcm}.(\ref{CosyzOfSyz}) shows that there is an MCM $R$-module $M$ such that $[M]=[N]$. Thus $\Phi$ is also surjective and hence an isomorphism.
\epr
 
\begin{rmk}
If $R$ is Gorenstein, Theorem \ref{structurethm} implies that $J(R)$ is torsion-free as an abelian group. We do not know whether this holds for a general local ring $R$.
\end{rmk} 

In \cite[Lem 8]{DRJ10}, the following result is proved for Artinian Gorenstein rings. 
 
\begin{cor} \label{lemma8}
Let $R$ be a Gorenstein local ring, and let $M$ and $N$ be \fg{} MCM $R$-modules. Then $[M] = [N]$ in $J(R)$ if and only if $$M\oplus R^m \cong N \oplus R^n$$ for some $m,n\in \z_{\ge 0}$. Thus if neither $M$ nor $N$ has a free summand, $[M]=[N]$ in $J(R)$ if and only if $M \cong N$. 

\end{cor}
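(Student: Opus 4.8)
The plan is to derive Corollary \ref{lemma8} directly from the structure theorem, Theorem \ref{structurethm}, using the explicit description of the isomorphism $\Phi\colon \mathcal{A}\to J(R)$ and its inverse $\Psi$.

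First I would handle the reverse implication, which is immediate: if $M\oplus R^m\cong N\oplus R^n$, then $[M\oplus R^m]=[N\oplus R^n]$ in $J(R)$, and since $[R]=0$ in $J(R)$ by Remark \ref{firstrmks}.(\ref{rmk1}) together with (R3), we get $[M]=[N]$. For the forward implication, first reduce to the case where neither $M$ nor $N$ has a free summand: write $M\cong M_0\oplus R^m$ and $N\cong N_0\oplus R^n$ with $M_0, N_0$ having no free summands (using the Krull-Remak-Schmidt property). Then $M_0$ and $N_0$ are still MCM (or zero), and $[M]=[M_0]$, $[N]=[N_0]$ in $J(R)$, so it suffices to show $[M_0]=[N_0]$ implies $M_0\cong N_0$.

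Now apply $\Psi$. By Remark \ref{gormcmrmks}.(\ref{ddiso}), since $M_0$ has no free summands (or is zero), $\Omega^{-(d+1)}\Omega^{d+1}M_0\cong M_0$, so decomposing $M_0\cong\bigoplus_{i=1}^r M_i$ into indecomposables (none of which is free) gives $\Psi([M_0])=\psi([M_0])=\sum_{i=1}^r[M_i]$, an element of $\mathcal{A}=\z^{(\mathcal{M})}$; similarly $\Psi([N_0])=\sum_{j=1}^s[N_j]$ where $N_0\cong\bigoplus_{j=1}^s N_j$. From $[M_0]=[N_0]$ we get $\sum_i[M_i]=\sum_j[N_j]$ in the \emph{free} $\z$-module $\mathcal{A}$ on the basis $\mathcal{M}$. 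Since this is a basis, $r=s$ and the multisets $\{[M_i]\}$ and $\{[N_j]\}$ coincide, i.e.\ $M_0\cong N_0$ by Krull-Remak-Schmidt. This proves $M_0\cong N_0$, hence $M\oplus R^n\cong N\oplus R^m$ in the original (not-necessarily-reduced) case, and the final sentence of the corollary is exactly the reduced case.

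I do not expect a serious obstacle here; the work has essentially been done in setting up $\Phi$ and $\Psi$. The one point requiring a little care is the reduction to modules without free summands and bookkeeping of the exponents $m, n$: one must note that $M$ being MCM does not force $M_0\neq 0$ (the "module" could be free, i.e.\ $M_0=0$), and that the equality in $\mathcal{A}$ is read off using the fact that $\mathcal{M}$ is a $\z$-basis, not merely a generating set—this is precisely where the injectivity of $\Phi$ (equivalently, that $\Psi\circ\Phi=\mathrm{id}$) is used.
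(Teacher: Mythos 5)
Your argument is correct as far as it goes, but it only proves the Corollary under the additional hypothesis that $R$ has the Krull--Remak--Schmidt property, which is not assumed in the statement. The entire forward direction of your proof hinges on applying $\Psi$ (equivalently Theorem \ref{structurethm}) and on reading off multisets of indecomposable summands against the basis $\mathcal{M}$, and both of these require KRS. The paper is careful here: it first proves the forward implication assuming $R$ is complete (so that KRS is available, \cite[Cor 1.10]{LW12}), and then reduces a general Gorenstein local $R$ to $\widehat{R}$ via Lemma \ref{flat}. This reduction has two nontrivial ingredients that your proposal omits. First, one must check that $\widehat{M}$ and $\widehat{N}$ still have no free summands; the paper does this by observing that $M$ has a free summand iff the evaluation map $M^*\otimes_R M\to R$ is surjective, a condition preserved by base change to $\widehat{R}$. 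Second, having concluded $\widehat{M}\cong\widehat{N}$ over $\widehat{R}$, one must descend the isomorphism to $R$; the paper invokes \cite[Cor 1.15]{LW12}. Without these steps the result is established only for complete (or Henselian) Gorenstein rings.

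A smaller point: in the reduction, you appeal to KRS to split off the maximal free summand $M\cong M_0\oplus R^m$, but KRS is not available over a general local ring. That particular step happens to be harmless (over any local ring one can iteratively peel off free direct summands of a finitely generated module, and the process terminates), but the justification should not be KRS. Likewise, for the final sentence, the paper instead deduces $M\cong N$ from $M\oplus R^m\cong N\oplus R^n$ using cancellation over local rings (\cite[Cor 1.16]{LW12}); your route via $M_0\cong N_0$ is fine once the completion gap is fixed.
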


\begin{proof} 
If $M\oplus R^m \cong N \oplus R^n$ for some $m, n\in\z_{\ge 0}$, then  in $J(R)$ we have 
\[[M] = [M\oplus R^m] = [N\oplus R^n] = [N].\]

Suppose that $[M] = [N]$ in $J(R)$. We may assume $M$ and $N$ have no free summands. 
We first prove the result under the assumption that $R$ is complete with respect to the maximal ideal. Complete rings have the \krs{} property for \fg{} modules; see for example \cite[Cor 1.10]{LW12}. Hence Theorem \ref{structurethm} applies.
Let $d = \dim R$, and let \makebox{$\ds \Psi: J(R) \to \mathcal{A}$} be the isomorphism given in Theorem \ref{structurethm}. Suppose
$$M  = \hspace{-5pt}\bigoplus_{[M_\lambda]\in \mnor} \hspace{-5pt} M_\lambda^{e_\lambda} \quad \tn{ and } \quad N = \hspace{-5pt}\bigoplus_{[M_\lambda]\in \mnor} \hspace{-5pt} M_\lambda^{f_\lambda}$$ 
where $e_\lambda, f_\lambda\ge 0$. 
From Remark \ref{gormcmrmks}.(\ref{nofreesmds}) and the definition of $\psi$ given in \eqref{psi}, one gets an equality $$ \sum_{[M_\lambda] \in \mnor} \hspace{-4pt} e_\lambda [M_\lambda] = \Psi([M]) = \Psi([N]) = \hspace{-6pt} \sum_{[M_\lambda] \in \mnor} \hspace{-4pt}  f_\lambda [M_\lambda].$$
Since $\mathcal{A}$ is free on $\mathcal{M}$, we have $e_\lambda = f_\lambda$ for all $\lambda$. Therefore $M \cong N$ as $R$-modules. 

Now suppose that $R$ is any local ring with maximal ideal $\mathfrak{m}$. Write $\widehat{R}$ for the $\mathfrak{m}$-adic completion of $R$. If $[M]=[N]$ in $J(R)$, then $[M\otimes_R \widehat{R}]=[N\otimes_R \widehat{R}]$ in $J(\widehat{R})$ by Lemma \ref{flat}. 

Note that an $R$-module $M$ has a free summand if and only if the evaluation map $ev: M^*\otimes _R M \to R$, where $\vp\otimes m \mapsto \vp(m)$, is surjective.
But if this map is surjective for $M$, then the map $ev\otimes_R \widehat{R}$ is also surjective. So since $M$ and $N$ have no free summands, $M\otimes_R \widehat{R}$ and $N\otimes_R \widehat{R}$ also have no free summands.  

The result for complete rings then shows that $M\otimes_R \widehat{R} \cong N\otimes_R \widehat{R}$ as $\widehat{R}$-modules, and \cite[Cor 1.15]{LW12} implies that $M\cong N$.

Note that cancellation of direct summands is valid over local rings \cite[Cor 1.16]{LW12}. Then $M\oplus R^m \cong N \oplus R^n$ implies that $M\oplus R^{m'} \cong N$ or $M \cong N \oplus R^{n'}$. Thus if neither $M$ nor $N$ has a free summand, $M\cong N$. 
\end{proof}

\section{Gorenstein local rings: torsion in $J(R)$} \label{gorrings2}

Let $R$ be a Gorenstein local ring. The main result of this section, Theorem \ref{torsionthm}, is that the class of a module is torsion in $J(R)$ if and only if the module is eventually periodic. This result does not extend verbatim to Cohen-Macaulay local rings; see Example \ref{CMex}. 

In the next lemma, we give a decomposition for the special type of \makebox{$\ztt$-modules} discussed in Remark \ref{ZG}.

\begin{lem}\label{zgmod}
Let $G=\langle t \rangle$, and let $X$ be a set with a $G$-action. Then there is an isomorphism of $\z G$-modules
$$\z X \cong \bigoplus_{n=1}^\infty \left(\frac{\z [t]}{(t^n-1)}\right)^{b_n} \bigoplus \left(\z G\right)^{b_\infty}$$
where $b_\infty, b_n\in \z_{\ge 0}\cup \{\infty\}$ for all $n$. \hfill \qedsymbol
\end{lem}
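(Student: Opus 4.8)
The plan is to decompose $X$ into $G$-orbits and analyze each orbit separately. Writing $X = \coprod_{\lambda} O_\lambda$ as a disjoint union of orbits, we get a corresponding direct sum decomposition $\z X = \bigoplus_\lambda \z O_\lambda$ as $\z G$-modules, since $G$ permutes the basis within each orbit. So it suffices to identify $\z O$ for a single orbit $O$. Since $G = \langle t \rangle \cong \z$, every subgroup is either trivial or of the form $n\z$ for some $n \geq 1$; by the orbit-stabilizer correspondence, each orbit is isomorphic as a $G$-set either to $G$ itself (when the stabilizer is trivial, the "free" or "infinite" case) or to $G/n\z$ for some $n \geq 1$ (the finite orbit of size $n$).

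Next I would identify the $\z G$-module $\z O$ in each case. When $O \cong G$ as a $G$-set, $\z O$ is by definition the free $\z G$-module on one generator, i.e., $\z G$ itself. When $O \cong G/n\z$, pick a basepoint $x_0 \in O$; then $x_0, tx_0, \dots, t^{n-1}x_0$ are the distinct elements of $O$, and $t^n x_0 = x_0$. The $\z$-linear map $\z[t] \to \z O$ sending $t^i \mapsto t^i x_0$ is surjective with kernel exactly the ideal generated by $t^n - 1$ (here one uses that $t$ acts invertibly, so $\z[t]$ and $\ztt$ give the same quotient modulo $t^n-1$ since $t$ is already a unit mod $t^n - 1$). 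Hence $\z O \cong \z[t]/(t^n - 1)$ as $\z G$-modules. Collecting the orbits by isomorphism type and letting $b_n$ (resp.\ $b_\infty$) be the cardinality of the set of orbits of type $G/n\z$ (resp.\ of type $G$), allowing the value $\infty$, yields the claimed decomposition.

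The one point requiring a little care — and the closest thing to an obstacle — is the bookkeeping when infinitely many orbits occur: one must allow $b_n, b_\infty \in \z_{\ge 0} \cup \{\infty\}$ and interpret $(\z[t]/(t^n-1))^{b_n}$ as a direct sum (not product) indexed by a set of the appropriate cardinality, which is exactly how the statement is phrased. Beyond that, the verification that $t^n - 1$ generates the full kernel of $\z[t] \to \z O$ is the only computation, and it is routine: if $f(t) \in \z[t]$ maps to $0$, divide $f$ by $t^n - 1$ in $\z[t]$ (this monic polynomial admits division with remainder) to write $f = (t^n-1)q + r$ with $\deg r < n$, and then $r$ maps the basis elements $1, t, \dots, t^{n-1}$ of $\z O$ to $0$, forcing all coefficients of $r$ to vanish.
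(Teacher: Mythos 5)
Your proof is correct and follows essentially the same route as the paper: decompose $X$ into $G$-orbits, note each orbit is $G$ or $G/n\z$ as a $G$-set, and identify $\z O$ with $\z G$ or $\z[t]/(t^n-1)$ accordingly. You supply a bit more detail (the orbit-stabilizer correspondence, explicit verification that $t^n-1$ generates the full kernel, and the cardinality bookkeeping), but the underlying argument is the same as the paper's.
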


\bpr
For any $x\in X$, either $t^n x \ne x$ for all $n\ne 0$ and the orbit of $x$ is 
\[Gx = \{t^i x : i\in \z\},\]
or there is an $n>0$ with $t^n x = x$ and 
\[Gx = \{x, tx, t^2x, \dots, t^{n-1}x\}.\]
Then for each $x\in X$ either $\z G x \cong \z G$ or $\z G x \cong \z[t]/(t^n-1)$ as $\z G$-modules; in either case, the map assigning $x$ to $1$ induces an isomorphism. Thus the decomposition of $X$ into orbits gives the desired isomorphism.
\epr

Recall that the \textit{torsion submodule} of a $\z G$-module $L$ is 
$$T_{\z G}(L) = \{u\in L : ru= 0 \tn{ for some } r\in \z G\setminus\{0\}\}.$$
An element $u\in T_{\z G}(L)$ is said to be a \textit{torsion element} of $L$.

\begin{prop} \label{zgmodtor}
An element $u\in \z X$ is torsion if and only if there exists an $n\in\n$ such that $(t^n-1) u = 0$.
\end{prop}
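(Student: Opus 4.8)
The plan is to reduce to the structural decomposition provided by Lemma \ref{zgmod}, since on each summand the torsion elements can be described explicitly. First I would write $\z X \cong \bigoplus_{n=1}^\infty (\z[t]/(t^n-1))^{b_n} \oplus (\z G)^{b_\infty}$ as $\z G$-modules via Lemma \ref{zgmod}, and note that $\z G = \ztt$ is a domain, so $(\z G)^{b_\infty}$ is torsion-free. Hence the torsion submodule $T_{\z G}(\z X)$ is contained in $\bigoplus_n (\z[t]/(t^n-1))^{b_n}$, and I claim it equals that sum: any element $v$ of $\z[t]/(t^n-1)$ satisfies $(t^n-1)v=0$, so every element of the finite-dimensional-over-$\z$ summands is torsion. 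This already shows that $u$ is torsion if and only if $u$ lies in $\bigoplus_n (\z[t]/(t^n-1))^{b_n}$.

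The remaining point is to upgrade ``$u$ is torsion'' to the uniform statement ``$(t^n-1)u=0$ for a single $n\in\n$.'' Here I would use that any given $u\in\z X$ has finite support, hence lies in a finite sub-sum $\bigoplus_{n\in S}(\z[t]/(t^n-1))^{c_n}$ for some finite set $S\subseteq\n$ (together with possibly finitely many $\z G$-coordinates, which must vanish if $u$ is torsion, as above). Writing $m$ for the least common multiple of the finitely many $n\in S$ appearing in the support of $u$, one has $t^n-1 \mid t^m-1$ in $\z[t]$ for each such $n$, so $(t^m-1)$ annihilates each coordinate of $u$ and therefore $(t^m-1)u=0$. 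The converse is trivial: if $(t^n-1)u=0$ for some $n\in\n$, then $t^n-1\in\z G\setminus\{0\}$ witnesses that $u$ is a torsion element.

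I do not anticipate a serious obstacle: the only thing to be careful about is bookkeeping with the orbit decomposition—specifically, translating ``$u$ has finite support in the basis $X$'' into ``$u$ involves only finitely many cyclic summands, each of bounded period'' so that the least common multiple is a genuine natural number. One should also double-check the elementary fact that $t^n-1$ divides $t^m-1$ in $\z[t]$ when $n\mid m$, which is immediate from the geometric-series identity $t^m-1 = (t^n-1)(t^{m-n}+t^{m-2n}+\cdots+1)$. Everything else is formal.
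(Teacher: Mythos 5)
Your proof is correct and takes essentially the same approach as the paper: it reduces to the decomposition of Lemma \ref{zgmod}, discards the torsion-free $\z G$-summands, and uses the divisibility $t^\ell-1 \mid t^m-1$ for $\ell\mid m$ on the finitely many cyclic summands supporting $u$. The only cosmetic difference is that you take the least common multiple of the relevant exponents, while the paper uses the product and an explicit induction on the number of nonzero coordinates.
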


\bpr
Suppose $u$ is torsion in $\z X$. 
Identifying $\z X$ with the right hand side of the isomorphism in Lemma \ref{zgmod}, one finds that $u$ belongs to the submodule $ \oplus_{n=1}^\infty \left(L_n\right)^{b_n}$ of $\z X$ where 
$$\ds L_n = \dfrac{\z [t]}{(t^n-1)}.$$
Consider the case when $u= v+w$ where $v\in L_\ell$ and $w\in L_m$ for some $\ell, m\in \n$.  Then $(t^\ell -1) v = 0$ and $(t^m-1) w = w$, and hence $(t^{m\ell} -1)(v+w) = 0$, since $(t^\ell-1)$ and $(t^m-1)$ both divide $t^{m\ell}-1$.
By induction on the number of terms in $u$, there exists an $n\in \n$ such that $(t^n-1)u = 0$.  

The reverse implication is immediate.
\epr

In light of the preceding results, Theorem \ref{structurethm} has the following corollaries.

\begin{cor} \label{tors_cor}
Let $R$ be a Gorenstein local ring that has the \krs{} property. The following statements hold.
\begin{enumerate}
\item \label{Jtor} 
An element $u\in J(R)$ is torsion if and only if there exists an $n\in \n$ such that $(t^n-1) u = 0$. 
\item \label{tors_elt}
The $\ztt$-module $J(R)$ has nonzero torsion if and only if there is a \fg{} $R$-module $M$ such that $[M]$ is torsion. 
\end{enumerate}
\end{cor}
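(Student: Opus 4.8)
The plan is to read off both statements from the structure theorem (Theorem~\ref{structurethm}) together with the torsion description in Proposition~\ref{zgmodtor}. Write $\Phi\colon\mathcal A\to J(R)$ and $\Psi\colon J(R)\to\mathcal A$ for the mutually inverse $\ztt$-linear isomorphisms of Theorem~\ref{structurethm}, where $\mathcal A=\z^{(\mnor)}=\z\mnor$ carries the $G=\langle t\rangle$-action of Remark~\ref{ZG}, so that $\ztt=\z G$ acts on $\mathcal A$ with $t[M]=[\Omega^{-1}M]$. Since $\Phi$ and $\Psi$ are inverse isomorphisms of $\z G$-modules they identify $T_{\z G}(J(R))$ with $T_{\z G}(\mathcal A)$, and for any $r\in\ztt$ and $u\in J(R)$ one has $ru=0$ if and only if $r\,\Psi(u)=0$.

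For part~(1): given $u\in J(R)$, apply Proposition~\ref{zgmodtor} with $X=\mnor$ to the element $\Psi(u)\in\z\mnor=\mathcal A$. That proposition says $\Psi(u)$ is torsion if and only if $(t^n-1)\Psi(u)=0$ for some $n\in\n$; transporting this back through the isomorphism $\Psi$ yields that $u$ is torsion if and only if $(t^n-1)u=0$ for some $n\in\n$.

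For part~(2): the implication ``$\Leftarrow$'' is immediate, a nonzero torsion class $[M]$ witnessing nonzero torsion in $J(R)$ (I read the statement as asserting the existence of a module $M$ with $[M]$ a \emph{nonzero} torsion element; the class of the zero module is always torsion). For ``$\Rightarrow$'', suppose $u\in J(R)$ is a nonzero torsion element, so $\Psi(u)\in\mathcal A$ is nonzero and torsion. Write $\Psi(u)=\sum_\lambda c_\lambda[M_\lambda]$ with the $[M_\lambda]\in\mnor$ distinct and only finitely many $c_\lambda$ nonzero, and fix an index $\lambda$ with $c_\lambda\neq 0$. The $G$-orbit of $[M_\lambda]$ in $\mnor$ must be finite: if it were infinite, the sub-$\z G$-module of $\mathcal A$ spanned by that orbit would be a direct summand isomorphic to $\z G=\ztt$, which is a domain and hence torsion-free, so the nonzero component of $\Psi(u)$ in this summand could not be torsion, contradicting the fact that the torsion submodule is stable under projection onto direct summands. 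Hence the orbit of $[M_\lambda]$ has some finite size $n$, so $(t^n-1)[M_\lambda]=0$ in $\mathcal A$ while $[M_\lambda]\neq 0$ because $\mathcal A$ is free on $\mnor$. Applying $\Phi$, the canonical class $[M_\lambda]$ in $J(R)$ is a nonzero torsion element, which is the module sought.

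The whole argument is bookkeeping once Theorem~\ref{structurethm} is available; the one place calling for a little care is the orbit dichotomy in part~(2), namely that a torsion element of $\z\mnor$ has no component along an infinite (``free'') orbit. One could instead extract this directly from Lemma~\ref{zgmod}: under that decomposition the torsion submodule of $\z\mnor$ is $\bigoplus_{n\ge 1}(\z[t]/(t^n-1))^{b_n}$, which is nonzero precisely when $\mnor$ contains a finite $G$-orbit, and any $[M]$ in such an orbit is then a nonzero torsion class.
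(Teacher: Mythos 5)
Your proof is correct and follows the same approach as the paper: identify $J(R)$ with $\mathcal A=\z\mnor$ via Theorem~\ref{structurethm}, then apply Proposition~\ref{zgmodtor} for part~(1) and the orbit/summand decomposition of Lemma~\ref{zgmod} for part~(2). You simply spell out the step, left implicit in the paper's proof of~(2), that a torsion element of $\z\mnor$ has no component along an infinite orbit (a $\z G$-free summand), and you correctly flag the caveat that the module $M$ in~(2) must have $[M]\neq0$.
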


\bpr
For (\ref{Jtor}), note that $G=\langle t \rangle$ acts on $\mathcal{M}$. By Theorem \ref{structurethm}, 
 $J(R)\cong \mathcal{A} = \z \mathcal{M}$ as $\ztt$-modules. The result then follows from Proposition \ref{zgmodtor}.
 
To prove (\ref{tors_elt}), suppose $u$ is a nonzero torsion element of $J(R)$. In the notation of Lemma \ref{zgmod}, there is an $n\in\n$ such that $b_n\ne 0$. By Theorem \ref{structurethm}, there are some $[M_\alpha]\in \mnor$ that generate $L_n$, and thus $[M_\alpha]$ is torsion for each $\alpha.$ 

The reverse implication is immediate. 
\epr

\subsection{Torsion in $J(R)$}

Let $(R,\m)$ be a local ring. An $R$-module $M$ is said to be \emph{periodic} if there exists an $n\in\n$ such that $M\cong \Omega^n M$. The module $M$ is said to be \emph{eventually periodic} if there exists an $n\in\n$ and $\ell\in \z_{\ge 0}$ such that $\Omega^{\ell} M \cong \Omega^{n+\ell}M$. In either case, the minimal such integer $n$ is called the \emph{period} of $M$.
We make some observations about torsion in $J(R)$ and eventually periodic modules.

\begin{rmk} Let $M$ be a \fg{} $R$-module. It is easy to see that the following statements hold.
\begin{enumerate}
\item [(1)] $[M]$ is torsion in $J(R)$ if and only if $[\Omega^n M]$ is torsion for some (equivalently, all) $n\in \n$. 
\item [(2)] $M$ is eventually periodic if and only if $\Omega^n M$ is eventually periodic for some (equivalently, all) $n\in \n.$
\end{enumerate}
\end{rmk}

In what follows, we write $\widehat{M}$ for the $\m$-adic completion of the $R$-module $M$. 

\begin{lem} \label{complete} 
Let $M$ be a \fg{} $R$-module, and let $i,j\in\z_{\ge 0}$. Then 
\makebox{$\Omega^i_R M \cong \Omega^j_R M$} if and only if $\Omega^i_{\widehat{R}} \widehat{M} \cong \Omega^j_{\widehat{R}} \widehat{M}$.
In particular, $M$ is eventually periodic if and only if the $\widehat{R}$-module $\widehat{M}$ is eventually periodic as an $\widehat{R}$-module. \end{lem}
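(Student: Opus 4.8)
The plan is to reduce everything to two facts about the $\m$-adic completion functor $(-)\,\widehat{\ }\,$: first, that completion is flat and faithfully flat on $R$ (since $R$ is local Noetherian), so $\widehat{R}\otimes_R -$ carries finitely generated $R$-modules to finitely generated $\widehat{R}$-modules and commutes with the formation of syzygies; second, that for finitely generated modules over a local ring, $M\cong N$ over $R$ if and only if $\widehat{M}\cong\widehat{N}$ over $\widehat{R}$ — this is exactly \cite[Cor 1.15]{LW12}, already invoked in the proof of Corollary \ref{lemma8}. Granting these, the biconditional $\Omega_R^i M\cong\Omega_R^j M\iff\Omega_{\widehat{R}}^i\widehat{M}\cong\Omega_{\widehat{R}}^j\widehat{M}$ is immediate, and the "in particular" clause follows by quantifying over the exponents: $M$ is eventually periodic means $\Omega_R^{\ell}M\cong\Omega_R^{n+\ell}M$ for some $n\in\n$, $\ell\ge0$, which by the first assertion holds iff $\Omega_{\widehat{R}}^{\ell}\widehat{M}\cong\Omega_{\widehat{R}}^{n+\ell}\widehat{M}$, i.e.\ iff $\widehat{M}$ is eventually periodic.

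The one genuine point to nail down is that completion commutes with syzygies: if $0\to \Omega_R M\to F\to M\to 0$ is exact with $F$ a finitely generated free $R$-module, then applying the exact functor $\widehat{R}\otimes_R-$ gives an exact sequence $0\to\widehat{R}\otimes_R\Omega_R M\to\widehat{F}\to\widehat{M}\to0$ with $\widehat{F}$ a finitely generated free $\widehat{R}$-module, so $\widehat{R}\otimes_R\Omega_R M$ is a syzygy of $\widehat{M}$; by Schanuel's Lemma (the \nameref{Schanuel} stated above) it agrees with $\Omega_{\widehat{R}}\widehat{M}$ up to a free summand. Iterating, $\widehat{R}\otimes_R\Omega_R^i M$ and $\Omega_{\widehat{R}}^i\widehat{M}$ differ by a free summand for every $i$. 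Here I would take care to use the convention fixed earlier in the paper (after Remark \ref{coverexists}) that over a local ring $\Omega(-)$ is chosen via a \emph{free cover}, so it has no free summands; since a free cover of $M$ completes to a free cover of $\widehat{M}$ (both are surjections from a free module of rank $\nu(M)=\nu(\widehat{M})$, using $k\cong\widehat{R}/\widehat{\m}$), one actually gets an honest isomorphism $\widehat{R}\otimes_R\Omega_R^i M\cong\Omega_{\widehat{R}}^i\widehat{M}$, not merely stable isomorphism — which makes the application of \cite[Cor 1.15]{LW12} clean.

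With that in hand the proof is short: for the forward direction of the main equivalence, apply $\widehat{R}\otimes_R-$ to an isomorphism $\Omega_R^i M\cong\Omega_R^j M$; for the reverse, use $\widehat{R}\otimes_R\Omega_R^i M\cong\Omega_{\widehat{R}}^i\widehat{M}\cong\Omega_{\widehat{R}}^j\widehat{M}\cong\widehat{R}\otimes_R\Omega_R^j M$ and conclude $\Omega_R^i M\cong\Omega_R^j M$ by faithful flatness / \cite[Cor 1.15]{LW12}. The "in particular" statement is then a formal consequence as described above. I expect the only mild obstacle to be bookkeeping around free summands versus the free-cover normalization of $\Omega$; everything else is a routine application of flatness of completion and the cited descent-of-isomorphism result.
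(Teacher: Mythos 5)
Your proposal is correct and follows essentially the same route as the paper's proof, which likewise reduces to the commutation $\widehat{\Omega_R^i M}\cong\Omega_{\widehat{R}}^i\widehat{M}$ together with \cite[Cor 1.15]{LW12} for the descent direction. The one difference is that the paper invokes the commutation of completion with syzygies without comment, whereas you supply the justification (flatness of completion plus the free-cover normalization and $\nu_R(M)=\nu_{\widehat{R}}(\widehat{M})$), which is a welcome clarification rather than a deviation.
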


\bpr
Given that $\Omega^i_R M \cong \Omega^j_R M$, one has 
$$\Omega_{\widehat{R}}^{i} (\widehat{M}) \cong \widehat{\Omega_R^{i}M} \cong \widehat{\Omega_R^{j} M}\cong \Omega_{\widehat{R}}^{j}  (\widehat{M}).$$ 
Suppose that $\Omega^i_{\widehat{R}} \widehat{M} \cong \Omega^j_{\widehat{R}} \widehat{M}$. Then we have the following isomorphisms:
$$\widehat{\Omega_R^i M} \cong \Omega_{\widehat{R}}^{i} (\widehat{M}) \cong \Omega_{\widehat{R}}^{j} (\widehat{M}) \cong \widehat{\Omega_R^{j} M},$$
and so $\Omega_R^i M \cong \Omega_R^{j} M$ by \cite[Cor 1.15]{LW12}. 
\epr

\begin{thm} \label{torsionthm}
Let $R$ be a Gorenstein local ring, and let $M$ be a finitely generated \makebox{$R$-module.} Then $[M]$ is torsion in $J(R)$ with respect to the $\ztt$-action if and only if $M$ is eventually periodic. Moreover, for any $n\in \n$, the following conditions are equivalent:
\begin{enumerate}
\item \label{ntor} $(t^n-1)[M] = 0$ in $J(R)$.
\item \label{npdc} $\Omega^{\ell} M \cong \Omega^{n+\ell} M$ for $\ell\gg 0$.
\end{enumerate}
\end{thm}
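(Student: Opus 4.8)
The strategy is to reduce to the complete case, where Theorem \ref{structurethm} gives us the concrete model $J(R) \cong \mathcal{A} = \z\mathcal{M}$, and then translate the algebraic condition $(t^n-1)[M]=0$ into an isomorphism of cosyzygy modules using the fact that $\mathcal{A}$ is free on $\mathcal{M}$. The first move is to observe that the ``torsion iff eventually periodic'' statement follows from the equivalence of \eqref{ntor} and \eqref{npdc} together with Corollary \ref{tors_cor}.(\ref{Jtor}): an element $u\in J(R)$ is torsion if and only if $(t^n-1)u=0$ for some $n$, and by the equivalence this happens precisely when $\Omega^\ell M \cong \Omega^{n+\ell}M$ for $\ell \gg 0$, i.e. $M$ is eventually periodic. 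Strictly, Corollary \ref{tors_cor} requires the \krs{} property, so I would first reduce to the complete ring $\widehat R$: by Lemma \ref{flat} the map $J(R)\to J(\widehat R)$ sends $[M]\mapsto [\widehat M]$ and commutes with the $\ztt$-action, and by Corollary \ref{lemma8} (applied to MCM modules, after passing to a high syzygy via Proposition \ref{Jmodmcm}.(\ref{CosyzOfSyz})) together with Lemma \ref{complete}, the relevant isomorphisms of (co)syzygies descend from $\widehat R$ to $R$. So it suffices to prove the equivalence of \eqref{ntor} and \eqref{npdc} assuming $R$ has the \krs{} property.

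For the equivalence itself, set $d=\dim R$ and let $N = \Omega^{-(d+1)}\Omega^{d+1}M$, which is either zero or MCM with no free summands; by Remark \ref{gormcmrmks}.(\ref{ddiso}) and Remark \ref{syzygy} one has $[M] = t^{d+1}[N]$ in $J(R)$, and $[M]$ is torsion iff $[N]$ is, with the same annihilator polynomials of the form $t^n-1$ (multiplication by $t^{\pm1}$ is an automorphism of $J(R)$ by Lemma \ref{syzy_hom} and Remark \ref{syzygy}). Under the isomorphism $\Psi$ of Theorem \ref{structurethm}, $[N]$ maps to $\sum [N_i]$ where $N = \bigoplus N_i$ is a decomposition into indecomposables, and the $G=\langle t\rangle$ action on $\mathcal{M}$ is $t[\,\cdot\,] = [\Omega^{-1}(\,\cdot\,)]$. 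Now $(t^n-1)[N]=0$ in $J(R)$ translates, via the freeness of $\mathcal{A}$ on $\mathcal{M}$, into the statement that the multiset of indecomposable summands of $\Omega^{-n}N$ equals that of $N$ — because $t^n\cdot \sum[N_i] = \sum[\Omega^{-n}N_i] = [\Omega^{-n}N]$ and the sets of basis elements appearing must coincide. By the \krs{} property this is exactly $\Omega^{-n}N \cong N$, hence $N$ is periodic; applying $\Omega^{d+1}$ and using Remark \ref{gormcmrmks}.(\ref{ddiso}) once more gives $\Omega^{d+1}N \cong \Omega^{-n}\Omega^{d+1}N$ is false in general, so instead I track it through $M$ directly: $\Omega^{n}(\Omega^{\ell}M)\cong \Omega^{\ell}M$ for $\ell \ge d+1$ follows because $\Omega^{\ell}M \cong \Omega^{\ell-d-1}N$ for such $\ell$ (up to free summands, using \ref{gormcmrmks}.(\ref{ddiso}) and \ref{gormcmrmks}.(\ref{nofreesmds})) and $\Omega^{-n}N\cong N$ implies $\Omega^{n}N \cong N$ by applying $\Omega^n$ to both sides and canceling. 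Conversely, if $\Omega^\ell M \cong \Omega^{n+\ell}M$ for some $\ell \gg 0$, then $[\Omega^\ell M] = [\Omega^{n+\ell}M]$ in $J(R)$, and since $[\Omega^j M] = t^{-j}[M]$ (Remark \ref{syzygy}), we get $t^{-\ell}[M] = t^{-n-\ell}[M]$, whence $(t^n-1)[M]=0$ after multiplying by $t^{n+\ell}$.

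The main obstacle I anticipate is the careful bookkeeping of free summands when passing between $M$, its high (co)syzygies, and $N$: the isomorphism in Remark \ref{gormcmrmks}.(\ref{ddiso}) only holds for modules without free summands, and the \krs{}-based argument in $\mathcal{A}$ only sees the non-free indecomposables, so I must consistently either work with $N$ (which has no free summands by construction) or phrase syzygy isomorphisms up to free summands and invoke cancellation over local rings (Corollary \ref{lemma8} or \cite[Cor 1.16]{LW12}) to clean them up. A secondary subtlety is making sure the reduction to the complete case is faithful in both directions — the forward direction uses flatness of $R\to\widehat R$ freely, but recovering the isomorphism $\Omega^\ell M\cong \Omega^{n+\ell}M$ over $R$ from its completion needs Lemma \ref{complete}, which I would invoke explicitly.
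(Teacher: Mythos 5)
Your overall plan matches the paper's: reduce to the complete case, pass to an MCM module with no free summands, and use the structure theorem (you inline this step, whereas the paper packages it as Corollary~\ref{lemma8}) to turn $(t^n-1)[\,\cdot\,]=0$ into an isomorphism $\Omega^{-n}(\,\cdot\,)\cong(\,\cdot\,)$, then apply $\Omega^n$ and use Remark~\ref{gormcmrmks}.(\ref{ddiso}). However, there is a concrete error in the bridge from periodicity of $N$ back to eventual periodicity of $M$. The formula $\Omega^\ell M\cong\Omega^{\ell-d-1}N$ is false: at $\ell=d+1$ it would say $\Omega^{d+1}M\cong N$ up to free summands, but $N=\Omega^{-(d+1)}\Omega^{d+1}M$ is the stable module $\Omega^{d+1}M$ shifted $d+1$ steps by the cosyzygy functor, not $\Omega^{d+1}M$ itself. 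The correct identification is that $\Omega^{d+1}N$ recovers the non-free part of $\Omega^{d+1}M$ (by Remark~\ref{gormcmrmks}.(\ref{ddiso})), whence $\Omega^\ell M\cong\Omega^\ell N$ up to free summands for $\ell>d+1$. With that fix the argument closes. Relatedly, the claim you discard as ``false in general'' --- that $\Omega^{d+1}N\cong\Omega^{-n}\Omega^{d+1}N$ --- is in fact true: on MCM modules without free summands the functors $\Omega$ and $\Omega^{-1}$ compose additively by Remark~\ref{gormcmrmks}.(\ref{ddiso}), so $\Omega^{d+1}\Omega^{-n}N\cong\Omega^{d+1-n}N\cong\Omega^{-n}\Omega^{d+1}N$, and this would have given you a cleaner route. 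The paper sidesteps this index-tracking by first replacing $M$ with a high enough syzygy, then stripping free summands explicitly, and only then invoking Corollary~\ref{lemma8} and \ref{gormcmrmks}.(\ref{ddiso}); you may find it easier to mirror that reduction than to work with $N=\Omega^{-(d+1)}\Omega^{d+1}M$ directly.
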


\begin{proof}
Suppose $M$ is eventually periodic. Then there are $i,j\in\z_{\ge 0}$ with $i\ne j$ such that $\Omega^i M \cong \Omega^{j}M$. In $J(R)$, $t^{-i} [M] = [\Omega^i M] = [\Omega^{j}M] = t^{-j}[M]$, and hence $(t^{-i}-t^{-j})[M]=0$.

Assume $[M]$ is torsion in $J(R)$. We first show that we can reduce to the case when $R$ is complete with respect to the maximal ideal $\mathfrak{m}$. Let $\widehat{M}$ denote the $\mathfrak{m}$-adic completion of $M$. 
Since the canonical homomorphism $\vp: R\to \widehat{R}$ is flat, Lemma \ref{flat} implies that there is a homomorphism of \makebox{$\ztt$-modules} $J(\vp) \! : \!J(R) \to J(\widehat{R})$ with \makebox{$J(\vp)([M]) = [\,\widehat{M}\,].$} Hence $[M]$ torsion implies that $[\widehat{M}]$ is torsion. 
If the result holds for complete rings, then $\widehat{M}$ is eventually periodic as an $\widehat{R}$-module. Hence Lemma \ref{complete} implies that $M$ is eventually periodic as an $R$-module. 

Assume $R$ is complete. To show that $M$ is eventually periodic, it is enough to show that some syzygy of $M$ is eventually periodic. We may assume $M$ is MCM with no free summands.

Remark \ref{gormcmrmks}.(\ref{dMCM}) implies that $\Omega^d M=0$ or is MCM for $d\gg 0$. If $\Omega^d M =0$, the proof is complete. If not, then replacing $M$ by $\Omega^d M$ we may assume that $M$ is MCM. If $M=N\oplus R$, then $[M]=[N]$ and hence $[M]$ is torsion in $J(R)$ if and only if $[N]$ is torsion. Note that $M$ is eventually periodic if and only if $N$ is eventually periodic, since $\Omega M \cong \Omega N$. Thus we may assume $M$ has no free summands.

As $[M]$ is torsion, Corollary \ref{tors_cor}.(\ref{Jtor}) implies that there is an $n\in\n$ such that $(t^n-1)[M]=0$ in $J(R)$. Proposition \ref{Jmodmcm}.(\ref{cosyzeq}) shows that \makebox{$[\Omega^{-n}M] = t^n[M] = [M]$.} By Remark \ref{gormcmrmks}.(\ref{cosyzMCM}), the $R$-module $\Omega^{-n} M$ is MCM, and thus Corollary \ref{lemma8} implies that \makebox{$\Omega^{-n} M \oplus F \cong M \oplus G$} for some free $R$-modules $F$ and $G$. Then, as $R$-modules, \makebox{$\Omega^n\left(\Omega^{-n} M \oplus F\right) \cong \Omega^n\left( M \oplus G\right)$,} and thus $\Omega^n \Omega^{-n} M  \cong \Omega^n M$. Since $M$ is MCM with no free summands, $M \cong \Omega^n \Omega^{-n} M$ by Remark \ref{gormcmrmks}.(\ref{ddiso}). Hence $M \cong \Omega^n M$, and therefore $M$ is eventually periodic.

It is clear that (\ref{npdc}) implies (\ref{ntor}). The argument in the previous paragraph along with Lemma \ref{complete} shows that (\ref{ntor}) implies (\ref{npdc}).
\end{proof}

\begin{cor} \label{complete2}
Let $R$ be a Gorenstein local ring and $M$ a \fg{} $R$-module. Then $[M]$ is torsion in $J(R)$ with respect to the $\ztt$-action if and only if $[\,\widehat{M}\,]$ is torsion in $J(\widehat{R})$ with respect to the $\ztt$-action.
\end{cor}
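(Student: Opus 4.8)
The plan is to chain together the characterization of torsion classes in terms of eventual periodicity (Theorem~\ref{torsionthm}) with the fact that eventual periodicity is insensitive to $\m$-adic completion (Lemma~\ref{complete}). Concretely, the corollary will follow from the string of equivalences
\[
[M]\ \text{torsion in}\ J(R)\ \Longleftrightarrow\ M\ \text{eventually periodic}\ \Longleftrightarrow\ \widehat{M}\ \text{eventually periodic over}\ \widehat{R}\ \Longleftrightarrow\ [\,\widehat{M}\,]\ \text{torsion in}\ J(\widehat{R}),
\]
where the outer two equivalences are Theorem~\ref{torsionthm} applied over $R$ and over $\widehat{R}$ respectively, and the middle one is Lemma~\ref{complete}. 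For this to make sense I would first record the standard fact that $\widehat{R}$ is again a Noetherian Gorenstein local ring (completion preserves the Gorenstein property and Noetherianness), so that Theorem~\ref{torsionthm} and Lemma~\ref{complete} are applicable to it.

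Alternatively, to keep the two implications visibly separate, I would dispatch the forward direction by hand. Since $\vp\colon R\to\widehat{R}$ is flat, Lemma~\ref{flat} produces a $\ztt$-linear homomorphism $J(\vp)\colon J(R)\to J(\widehat{R})$ with $J(\vp)([M])=[\,\widehat{M}\,]$; any $\ztt$-linear map carries torsion elements to torsion elements, since $r[M]=0$ with $r\in\ztt\setminus\{0\}$ forces $r[\,\widehat{M}\,]=J(\vp)(r[M])=0$. Hence $[M]$ torsion implies $[\,\widehat{M}\,]$ torsion. (By Corollary~\ref{tors_cor}.(\ref{Jtor}) one could even take $r=t^n-1$, but this is not needed.) For the converse, $[\,\widehat{M}\,]$ torsion in $J(\widehat{R})$ gives, via Theorem~\ref{torsionthm} over the Gorenstein local ring $\widehat{R}$, that $\widehat{M}$ is eventually periodic as an $\widehat{R}$-module; Lemma~\ref{complete} then yields that $M$ is eventually periodic over $R$; and Theorem~\ref{torsionthm} over $R$ turns this back into the torsionness of $[M]$ in $J(R)$.

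I do not expect a genuine obstacle: every ingredient has already been established, and the corollary is essentially a formal consequence of Theorem~\ref{torsionthm} together with Lemma~\ref{complete}. The only points deserving an explicit remark are that $\widehat{R}$ is Gorenstein local and Noetherian. If one wished to avoid invoking the full Theorem~\ref{torsionthm}, the converse could instead be extracted from the completeness reduction carried out inside that theorem's proof (it is there shown that torsionness of $[M]$ can be detected after completion), but reusing the theorem is the cleanest route.
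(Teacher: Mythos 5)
Your proposal is correct and matches the paper's argument: the paper also handles the forward implication via Lemma~\ref{flat} (flat base change induces a $\ztt$-linear map sending torsion to torsion) and the reverse implication via Theorem~\ref{torsionthm} combined with Lemma~\ref{complete}. Your alternative chain-of-equivalences phrasing is an equally valid, slightly more symmetric packaging of the same ingredients, and your remark that $\widehat{R}$ is again a Noetherian Gorenstein local ring is the right (standard) point to record.
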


\bpr
It follows from Lemma \ref{flat}, and was already used In the proof of Theorem \ref{torsionthm}, that if $[M]$ is torsion in $J(R)$, then $[\widehat{M}]$ is torsion in $J(\widehat{R})$. 

The reverse implication is immediate from Theorem \ref{torsionthm} and Lemma \ref{complete}.
\epr

The following corollary gives the result announced in the abstract. 

\begin{cor} \label{tors_epmod}
Let $R$ be a Gorenstein local ring that has the \krs{} property. The ring $R$ has a periodic module if and only if $J(R)$ has nonzero torsion.
\end{cor}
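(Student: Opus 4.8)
The plan is to obtain Corollary~\ref{tors_epmod} by combining Theorem~\ref{torsionthm} with Corollary~\ref{tors_cor}.(\ref{tors_elt}); the only care needed is to track nonvanishing, so that ``periodic'' is not confused with ``eventually periodic'' and so that the torsion element produced is genuinely nonzero. As is customary, I take a periodic module to be nonzero, since the zero module trivially satisfies $M\cong\Omega^n M$.

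For the implication ``$R$ has a periodic module $\Rightarrow$ $J(R)$ has nonzero torsion'', I would start with a nonzero $R$-module $M$ admitting $M\cong\Omega^n M$ for some $n\in\n$. Such an $M$ is in particular eventually periodic, so $[M]$ is torsion in $J(R)$ by Theorem~\ref{torsionthm}. To see this torsion is nonzero, I would argue that $\pd_R M=\infty$: if some syzygy $\Omega^i M$ were free, then $\Omega^{i+1}M=0$, hence $\Omega^{kn}M=0$ once $kn>i$, contradicting $\Omega^{kn}M\cong M\neq 0$. Thus $[M]\neq 0$ by Proposition~\ref{finprojdim}, so $[M]$ is a nonzero torsion element of $J(R)$.

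For the converse, assume $J(R)$ has nonzero torsion. From (the proof of) Corollary~\ref{tors_cor}.(\ref{tors_elt}), which applies since $R$ has the \krs{} property, I would extract a finitely generated $R$-module $M$ whose class $[M]$ is a nonzero torsion element of $J(R)$; indeed one may take $[M]\in\mnor$, an MCM, non-free, indecomposable class, which is automatically nonzero. By Theorem~\ref{torsionthm}, $M$ is eventually periodic, so $\Omega^{\ell}M\cong\Omega^{n+\ell}M$ for some $\ell\in\z_{\ge 0}$ and $n\in\n$. Setting $N=\Omega^{\ell}M$ gives $N\cong\Omega^n N$. Finally $N\neq 0$: otherwise $\pd_R M<\infty$, forcing $[M]=0$ by Proposition~\ref{finprojdim}, contrary to the choice of $M$. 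Hence $N$ is a (nonzero) periodic $R$-module.

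The argument is essentially formal once Theorem~\ref{torsionthm} and Corollary~\ref{tors_cor} are available, and I do not expect a serious obstacle. The only delicate point is the bookkeeping of nonvanishing: in one direction one must check that a periodic module has infinite projective dimension and hence a nonzero class in $J(R)$, and in the other that passing from an eventually periodic module to a sufficiently high syzygy produces a module that is both periodic and nonzero rather than one whose syzygies are eventually trivial.
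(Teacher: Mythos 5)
Your proof is correct and takes essentially the same route as the paper, which simply cites Corollary~\ref{tors_cor}.(\ref{tors_elt}) and Theorem~\ref{torsionthm}; you have filled in the bookkeeping (infinite projective dimension, passing to a high syzygy, $N\neq 0$) that the paper leaves to the reader.
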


\bpr
Corollary \ref{tors_cor}.(\ref{tors_elt}) and Theorem \ref{torsionthm} give the desired result.
\epr

\begin{cor} \label{torsioncor}
Suppose $M = \oplus_{i=1}^m M_i$ for some $R$-modules $M_i$. Then $[M]$ is torsion in $J(R)$ if and only if $[M_i]$ is torsion in $J(R)$ for all $i.$
\end{cor}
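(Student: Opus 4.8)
The plan is to prove the two implications separately; the ``only if'' direction is the substantive one. For the ``if'' direction, suppose each $[M_i]$ is torsion, say $r_i[M_i]=0$ with $r_i\in\ztt\setminus\{0\}$. Since $\ztt$ is an integral domain, $r:=r_1\cdots r_m$ is a nonzero element of $\ztt$ divisible by each $r_i$; by (R3) one has $[M]=\sum_{i=1}^m[M_i]$ in $J(R)$, so $r[M]=\sum_{i=1}^m r[M_i]=0$ and $[M]$ is torsion. This argument uses nothing about $R$ beyond $\ztt$ being a domain.

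For the converse, the first step is to reduce to the case that $R$ is complete with respect to its maximal ideal. By Corollary \ref{complete2}, $[M]$ is torsion in $J(R)$ if and only if $[\widehat M]$ is torsion in $J(\widehat R)$, and likewise $[M_i]$ is torsion in $J(R)$ if and only if $[\widehat{M_i}]$ is torsion in $J(\widehat R)$; since completion commutes with finite direct sums, $\widehat M\cong\bigoplus_{i=1}^m\widehat{M_i}$, so it suffices to treat $\widehat R$. Assume then that $R$ is complete, hence has the \krs{} property, and let $\Psi\colon J(R)\to\mathcal{A}=\z\mnor$ be the $\ztt$-linear isomorphism of Theorem \ref{structurethm}, where $G=\langle t\rangle$ acts on the set $\mnor$ of non-free indecomposable MCM classes via the (co)syzygy operators.

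The crux is that $\Psi$ carries the class of an honest module to a \emph{non-negative} element of the free abelian group $\z\mnor$: by construction $\Psi([N])=\psi([N])=\sum_j[N_j]$, where the $N_j$ are the non-free indecomposable summands of $\Omega^{-(d+1)}\Omega^{d+1}N$ (with $d=\dim R$), so all coefficients are $\ge 0$. Applying $\Psi$ to $[M]=\sum_{i=1}^m[M_i]$ gives $\Psi([M])=\sum_{i=1}^m\Psi([M_i])$, a sum of non-negative elements; hence there is no cancellation, and the support of $\Psi([M])$ in the basis $\mnor$ is the union of the supports of the $\Psi([M_i])$. By Lemma \ref{zgmod} and Proposition \ref{zgmodtor}, the torsion submodule of $\z\mnor$ consists exactly of the elements supported on the finite $G$-orbits. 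Since $[M]$ is torsion, so is $\Psi([M])$, whence its support—and therefore the support of each $\Psi([M_i])$—lies in the finite orbits; thus each $\Psi([M_i])$ is torsion, and as $\Psi$ is an isomorphism, each $[M_i]$ is torsion. Unwinding the reduction recovers the statement over the original $R$.

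The main obstacle, and the reason for routing the argument through the explicit model $\mathcal{A}=\z\mnor$, is cancellation. A more naive attempt—invoke Theorem \ref{torsionthm} to say $M$ is eventually periodic, use $\Omega^\ell(\bigoplus_i M_i)\cong\bigoplus_i\Omega^\ell M_i$, and try to conclude each $M_i$ is eventually periodic—founders because the \krs{} property applied to an isomorphism $\bigoplus_i\Omega^\ell M_i\cong\bigoplus_i\Omega^{\ell+n}M_i$ need not pair $\Omega^\ell M_i$ with $\Omega^{\ell+n}M_i$. Passing to $\mathcal{A}$, where module classes have non-negative coordinates and the torsion subgroup is transparently the span of the finite orbits, is precisely what removes this difficulty.
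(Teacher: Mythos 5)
Your proof is correct, and the substantive (``only if'') direction takes a genuinely different route from the paper's. The paper first invokes Theorem \ref{torsionthm} to translate torsion into eventual periodicity, reduces to indecomposable $M_i$, and then applies the \krs{} property to the isomorphism $\bigoplus_i\Omega^{n+\ell}(M_i)\cong\bigoplus_i\Omega^\ell(M_i)$; the permutation you worried about is handled there by following cycles (if $\Omega^{n+\ell}(M_i)\cong\Omega^\ell(M_{i+1})$ cyclically, then $\Omega^{mn+\ell}(M_1)\cong\Omega^\ell(M_1)$, and so on), so ``founders'' overstates the difficulty. That said, the cycle argument quietly relies on each $\Omega^\ell(M_i)$ being indecomposable, which is not automatic from $M_i$ being indecomposable when $M_i$ is not already MCM; your route through the explicit model $\mathcal{A}=\z\mnor$ sidesteps this subtlety entirely. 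The key observation---that $\Psi$ sends the class of any module to a vector with non-negative coordinates in the basis $\mnor$, so supports add without cancellation, combined with the identification (via Lemma \ref{zgmod} and Proposition \ref{zgmodtor}) of the torsion submodule of $\z\mnor$ with the span of the finite $G$-orbits---is a clean structural insight that makes the conclusion immediate. Both arguments ultimately rest on Theorem \ref{structurethm}; yours invokes it directly, while the paper's routes through Theorem \ref{torsionthm}, which is where the structure theorem actually does its work.
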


\bpr
Assume $[M_i]$ is torsion in $J(R)$ for all $i$. For each $i\in \{1,\dots, m\}$, there is an $f_i(t)\in \ztt$ such that $f_i(t)[M_i]=0$ in $J(R)$. Then $f_1(t) \cdots f_m(t) [M] =0$. 

Suppose $[M]$ is torsion in $J(R)$. We first prove the result under the assumption that $R$ is complete. It suffices to consider the case when each $M_i$ is indecomposable. 
Since $[M]$ is torsion in $J(R)$, Theorem \ref{torsionthm} implies that $M$ is eventually periodic. So there is an $n\in\n$ and an $\ell\in \z_{\ge0}$ such that $\Omega^{n+\ell} M \cong \Omega^\ell M,$ and therefore
$$\bigoplus_{i=1}^m \Omega^{n+\ell} (M_i) \cong \bigoplus_{i=1}^m \Omega^\ell (M_i).$$
We prove that each $[M_i]$ is torsion by using induction on $m$, the number of indecomposable summands of $M$. Suppose $M=M_1\oplus M_2$. Then by the \krs{} property, either $\Omega^{n+\ell} (M_i) \cong \Omega^\ell (M_i)$ for $i=1,2$ or $\Omega^{n+\ell} (M_1) \cong \Omega^\ell (M_2)$ and $\Omega^{n+\ell} (M_2) \cong \Omega^\ell (M_1)$. In the first case, it is clear that $M_1$ and $M_2$ are eventually periodic. In the second case, note that 
$$\Omega^{2n+\ell} (M_1) \cong \Omega^{n+\ell} (M_2) \cong \Omega^{\ell} (M_1), $$
and hence $M_1$ is eventually periodic. Similarly, $M_2$ is eventually periodic. Then by Theorem \ref{torsionthm}, $[M_i]$ is torsion in $J(R)$ for $i=1,2$. 

Suppose $M=\oplus_{i=1}^m M_i$ and that the conclusion holds for $s<m$. By the \krs{} property, for each $i$ there exists $j$ such that $\Omega^{n+\ell} (M_i) \cong \Omega^\ell (M_j)$. If there is an $i$ such that $\Omega^{n+\ell} (M_i) \cong \Omega^\ell (M_i)$, then the result follows from the inductive hypothesis. Without loss of generality, suppose $\Omega^{n+\ell} (M_i) \cong \Omega^\ell (M_{i+1})$ for $1 \le i \le m-1$ and $\Omega^{n+\ell} (M_m)\cong \Omega^\ell (M_1)$. The following isomorphisms of $R$-modules show that $M_1$ is eventually periodic:
$$\Omega^{mn+\ell} (M_1)\cong \Omega^{(m-1)n+\ell}(M_2)\cong \cdots \cong \Omega^{n+\ell} (M_m) \cong \Omega^\ell (M_1).$$
Similarly $M_i$ is eventually periodic for $2\le i\le m$, and consequently Theorem \ref{torsionthm} implies that $[M_i]$ is torsion in $J(R)$ for all $i$.

Now suppose that $R$ is any local ring and $[M]$ is torsion in $J(R)$. By Corollary \ref{complete2}, $[\widehat{M}]$ is torsion in $J(\widehat{R})$. Write 
$$\widehat{M} = \bigoplus_{i=1}^m \left(\bigoplus_{j=1}^{a_i} M_{ij}\right) \tn{ with } \hspace{4pt}  \widehat{M}_i = \bigoplus_{j=1}^{a_i} M_{ij}$$
and each $M_{ij}$ an indecomposable $\widehat{R}$-module.
The result for complete rings implies that $[M_{ij}]$ is torsion in $J(\widehat{R})$ for each $i$ and $j$. Then $[\widehat{M}_i]$ is torsion in $J(\widehat{R})$ for all $i$, and so Corollary \ref{complete2} implies that $[M_i]$ is torsion in $J(R)$ \makebox{for all $i$.}
\epr

Theorem \ref{torsionthm} also gives a characterization of hypersurface rings in terms of $J(R)$. The main result of \cite[Thm 7]{DRJ10} is that (\ref{ktor}) implies (\ref{hsurface}) holds when $R$ is an Artinian complete intersection.

\begin{cor} \label{hypersurface}
Let $(R,\m,k)$ be a Gorenstein local ring. Then the following conditions are equivalent:

\begin{enumerate}
\item \label{hsurface} $R$ is a hypersurface;
\item $(1-t^2)\cdot J(R)=0$;
\item $J(R)$ is a torsion module; 
\item \label{ktor} $[k]$ is torsion in $J(R)$ with respect to the $\ztt$-action.
\end{enumerate}
\end{cor}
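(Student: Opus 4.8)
The plan is to prove the cycle $(\ref{hsurface}) \Rightarrow (2) \Rightarrow (3) \Rightarrow (\ref{ktor}) \Rightarrow (\ref{hsurface})$, with essentially all the content concentrated in the first and last steps.

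For $(\ref{hsurface}) \Rightarrow (2)$ I would show that over a hypersurface every finitely generated module has an eventually $2$-periodic minimal free resolution and then quote Theorem \ref{torsionthm}. Writing $\widehat R \cong S/(f)$ with $S$ a complete regular local ring, Eisenbud's theory of matrix factorizations \cite{DE80} applies: for a finitely generated $\widehat R$-module $N$, a sufficiently high syzygy is MCM by the Depth Lemma (Remark \ref{gormcmrmks}.(\ref{dMCM})), one further application of $\Omega$ removes any free summands (since $\Omega_{\widehat R}\widehat R = 0$), and a reduced matrix factorization of $f$ then exhibits the result as $2$-periodic; hence $\Omega^{\ell+2}_{\widehat R} N \cong \Omega^\ell_{\widehat R} N$ for $\ell \gg 0$. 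Taking $N = \widehat M$ for a finitely generated $R$-module $M$ and invoking Lemma \ref{complete} gives $\Omega^{\ell+2}_R M \cong \Omega^\ell_R M$ for $\ell \gg 0$, so $(t^2-1)[M] = 0$ in $J(R)$ by Theorem \ref{torsionthm} (applied with $n=2$). Since the classes $[M]$ generate $J(R)$ over $\ztt$, it follows that $(1-t^2)\cdot J(R) = 0$. The implications $(2) \Rightarrow (3)$ and $(3) \Rightarrow (\ref{ktor})$ are then immediate, since $1-t^2 \ne 0$ in $\ztt$ (so every element of $J(R)$ is torsion), and $[k]$ is a particular element of $J(R)$.

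For $(\ref{ktor}) \Rightarrow (\ref{hsurface})$: if $[k]$ is torsion in $J(R)$, Theorem \ref{torsionthm} shows $k$ is eventually periodic over $R$, so its Betti numbers $\beta_i(k) = \rank_k \tor^R_i(k,k)$ are bounded. Here I would invoke Gulliksen's characterization of complete intersections (see also \cite{LA90}): a local ring whose residue field has bounded Betti numbers is a complete intersection, of codimension equal to $\mathrm{cx}_R k \le 1$; hence $R$ is a complete intersection of codimension at most one, that is, a hypersurface. (A regular local ring is counted here as the degenerate case $f=0$; this is in any case forced, since a regular local ring has $J(R)=0$ and so satisfies $(2)$, $(3)$ and $(\ref{ktor})$.)

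I expect $(\ref{ktor}) \Rightarrow (\ref{hsurface})$ to be the main obstacle: the other implications are either formal or a direct appeal to Theorem \ref{torsionthm}, but passing from the purely homological statement that $k$ is eventually periodic to the ring-theoretic conclusion that $R$ is a hypersurface is not internal to the theory of $J(R)$ developed here and genuinely requires importing the structure theory of complete intersections (Gulliksen's theorem that finite complexity of the residue field forces $R$ to be a complete intersection). The only other point needing care is the bookkeeping with depth and free summands in the matrix factorization argument for $(\ref{hsurface}) \Rightarrow (2)$, which is routine.
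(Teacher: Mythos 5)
Your proof is correct and takes essentially the same route as the paper: $(\ref{hsurface})\Rightarrow(2)$ via Eisenbud's eventual $2$-periodicity of minimal free resolutions over hypersurfaces together with Theorem~\ref{torsionthm}, the implications $(2)\Rightarrow(3)\Rightarrow(\ref{ktor})$ being formal, and $(\ref{ktor})\Rightarrow(\ref{hsurface})$ via Theorem~\ref{torsionthm} plus Gulliksen's theorem that bounded Betti numbers of $k$ force $R$ to be a hypersurface. The only differences are cosmetic: the paper cites the 1968 result \cite{TG68} directly (which is exactly what is needed) rather than the later and stronger complexity characterization of complete intersections you invoke, and it is terser about the completion bookkeeping in $(\ref{hsurface})\Rightarrow(2)$, which you carry out more carefully.
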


\bpr
(1) $\Rightarrow$ (2). For any module $M$ over a hypersurface one has $\Omega^{2+\ell} M \cong \Omega^{\ell} M$ for $\ell \gg 0$, by \cite[Thm 6.1]{DE80}, and hence $(1-t^2)[M]=0$. 

(2) $\Rightarrow$ (3) and (3) $\Rightarrow$ (4). These implications are immediate. 

(4) $\Rightarrow$ (1). By Theorem \ref{torsionthm}, the module $k$ is eventually periodic and hence the Betti numbers of $k$ are bounded. By \cite[Cor 1]{TG68}, $R$ is a hypersurface. 
\epr

The following class of examples shows that the statement of Theorem \ref{torsionthm} can fail for non-Gorenstein rings. 

\begin{ex} \label{CMex}
Let $(R,\m,k)$ be a local ring with $\m^2=0$ and embedding dimension $e \ge 2$. Note that $R$ is Cohen-Macaulay but not Gorenstein because the rank of its socle (as a $k$-vector space) is $e$. Then $(1-et) J(R) =0$, but $R$ has no nonzero nonfree eventually periodic module.

First, we note that $k$ is not eventually periodic but $[k]$ is torsion in $J(R)$. Indeed, the sequence
\[0\to \m\to R\to k\to 0\] is exact, and $\Omega k \cong \m$ as $R$-modules. Therefore $\Omega k \cong k^e$, which implies that $k$ is not eventually periodic. On the other hand, 
$t^{-1}[k] = e[k]$ in $J(R)$, and therefore \makebox{$(1-et)[k]=0$.}

Let $M$ be a nonzero, nonfree $R$-module. Since $\m^2=0$, we have $\Omega M \cong k^{\beta_1(M)}$. As $M$ is nonzero, $\beta_1(M) \ge 1$. Since $k$ is not eventually periodic, the module $M$ is not eventually periodic. However, 
\[t^{-1}[M] = [\Omega M] = \beta_1(M)[k]\]
in $J(R)$, and therefore 
\[(1-et)[M] = t(1-et)\beta_1(M)[k]=0.\]

\end{ex}

\begin{rmk}
Using Corollary \ref{tors_cor}.(\ref{Jtor}), one can determine the torsion submodule of $J(R)$ for a Gorenstein local ring that has the \krs{} property:
$$T_{\ztt} (J(R)) = \bigcup_{n=1}^\infty \ann_{J(R)} (1-t^n).$$
If $R$ is a complete intersection, then $T_{\ztt}(J(R)) = \ann_{J(R)}(1-t^2)$ by Theorem \ref{torsionthm}, since \cite[Thm 5.2]{DE80} shows that a periodic module $M$ over a complete intersection has period at most two and hence $(1-t^2)[M] =0$ in $J(R)$. For a Gorenstein ring $R$, however, $[M]$ torsion in $J(R)$ for an $R$-module $M$ need not imply that $(1-t^2)[M]=0$. Indeed, for each $n\in \n$,  there exists an Artinian Gorenstein local ring with a periodic module of period $n$; see \makebox{\cite[Ex 3.6]{GP90}.}
\end{rmk}

\section*{Acknowledgements}
I would like to thank my thesis advisor Srikanth Iyengar for inspiring the development of this work and for his constant support and advice. I am also grateful to Luchezar Avramov and Roger Wiegand for helpful suggestions during the preparation of this paper.

\end{document}